\setlist[enumerate,1]  {label={\rm (\roman*)}, leftmargin=1.5em}   %{label={(\arabic*)}} 
\setlist{noitemsep} % no spacing between items
\numberwithin{equation}{section}
\newtheorem{theorem}{Theorem}[section]
\newtheorem{proposition}[theorem]{Proposition}
\newtheorem{lemma}[theorem]{Lemma}
\newtheorem{remark}[theorem]{Remark}
\renewenvironment{proof}[1][Proof] {\noindent \textbf{#1.} }
{\  \rule{0.5em}{0.5em}\par \medskip}
\newcommand{\myleq}[1]{\ensuremath{\stackrel{\text{#1}}{\leq}}}
\newcommand{\defeq}{\vcentcolon=}
\newcommand{\eqdef}{=\vcentcolon}
\newcommand{\be}{\begin{equation} \label}
\newcommand\ee{\end{equation}}
\newcounter{assumption}
\renewcommand{\theassumption}{A\arabic{assumption}}
\renewcommand{\abs}[1]{\left| #1 \right|}
\newcommand{\eps}{\varepsilon}
\newcommand*\Lap{\mathop{}\!\mathbin\Delta}
\newcommand*\adh[1]{\overline{#1}} % (or \bar{#1})
\newcommand{\Rn}{{\mathbb{R}^n}}
\newcommand{\R}{\mathbb{R}}
\def\Xint#1{\mathchoice
	{\XXint\displaystyle\textstyle{#1}}%
	{\XXint\textstyle\scriptstyle{#1}}%
	{\XXint\scriptstyle\scriptscriptstyle{#1}}%
	{\XXint\scriptscriptstyle\scriptscriptstyle{#1}}%
	\!\int}
\def\XXint#1#2#3{{\setbox0=\hbox{$#1{#2#3}{\int}$ }
		\vcenter{\hbox{$#2#3$ }}\kern-.580\wd0}}
\def\dashint{\Xint-}
\renewcommand{\fint}{\dashint}   % Promedio, en forma display
\DeclareMathOperator*{\esssup}{ess\,sup}
\begin{document}

\title{Global existence and asymptotic behavior for diffusive Hamilton-Jacobi equations with Neumann boundary conditions}

\author{Joaquín Domínguez-de-Tena \thanks{Partially supported
		by Projects PID2019-103860GB-I00 and  PID2022-137074NB-I00,  MICINN and   GR58/08
		Grupo 920894, UCM, Spain and by ICMAT Severo Ochoa
		Grant CEX2019-000904-S funded by MCIN/AEI/ 10.13039/501100011033.}\ ${}^{,1}$ \\
	Philippe Souplet ${}^{2}$}

\date{}
\maketitle

\setcounter{footnote}{2}
\begin{center}
	\small ${}^{1}$Universidad
	Complutense de Madrid \\ Departamento de Análisis Matemático y  Matemática Aplicada,  \\ and \\
	Instituto de Ciencias Matemáticas CSIC-UAM-UC3M-UCM	
	\\ \ \\
	${}^{2}$
	Université Sorbonne Paris Nord, CNRS UMR 7539, LAGA, Villetaneuse, France \\
\end{center}

\makeatletter
\begin{center}
	${}^{1}${E-mail:
		joadomin@ucm.es}
	\\ 
	${}^{2}${E-mail:
		souplet@math.univ-paris13.fr}
\end{center}

\begin{center}
{\it This work is dedicated to the memory of Marek Fila, dear friend and colleague}
\end{center}
\makeatother

\begin{abstract}
	We investigate the diffusive Hamilton-Jacobi equation
	$$
	u_t-\Lap u = |\nabla u|^p
	$$
	with $p>1$, in a smooth bounded domain of 
	$\Rn$ with homogeneous Neumann boundary conditions and $W^{1,\infty}$ initial data.
	We show that all solutions exist globally, are bounded and converge in $W^{1,\infty}$ norm to a constant as $t\to\infty$,
	with a uniform exponential rate of convergence given by the second Neumann eigenvalue.
	This improves previously known results,
	which provided only an upper polynomial bound on the rate of convergence and required the convexity of the domain.
	Furthermore, we extend these results to a rather large class of nonlinearities $F(\nabla u)$ instead of~$|\nabla u|^p$.
\end{abstract}
\newpage
\section{Introduction and main results}
\subsection{Background and motivation}
In this paper, we study the Neumann problem for the diffusive Hamilton-Jacobi equation

\begin{equation}
	\label{eqn:HJpar}
	\left\{
	\begin{aligned}
		u_t-\Lap u & = |\nabla u|^p,&&x\in\Omega,\ t>0, \\
		\frac{\partial u}{\partial\nu} & = 0,&&x\in\partial \Omega,\ t>0, \\
		u(x,0) & =u_0(x),&&x\in\Omega.
	\end{aligned}
	\right.
\end{equation}
Throughout this paper we assume that $p>1$ and $\Omega\subset \Rn$ is a 
bounded $C^{2+\alpha}$ domain for some $0<\alpha<1$, $\frac{\partial u}{\partial\nu}$ denotes the exterior normal derivative,
 and we shall denote by $(S(t))_{t\ge 0}$ the Neumann heat semigroup. Also we set
$$X:=W^{1,\infty}(\Omega),
\qquad \|v\|_X=\|v\|_{L^\infty(\Omega)}+\|\nabla v\|_{L^\infty(\Omega)}.$$

For any $u_0\in X$,
problem \eqref{eqn:HJpar} admits a unique, maximal classical solution 
			\begin{equation}
	\label{regulu0}
 u\in C^{2,1}(\adh{\Omega}\times(0,T)),\quad\hbox{with $\lim_{t\to 0}\|u(t)-S(t)u_0\|_X=0$}
	\end{equation}
(see Appendix for a proof),  and we also have
		\be{regulu1b}
		u\in L^\infty_{loc}([0,T);X).
			\ee
In what follows, we shall denote this solution by $u$ and by
$T=T(u_0)\in(0,\infty]$ its maximal existence time.
Moreover, $u$ satisfies the  $X$-blowup alternative:
	\begin{equation}
		\label{eqn:cont}
		T<\infty\ \Longrightarrow\ \lim_{t\to T}\|u(t)\|_X=\infty.
			\end{equation}

This  diffusive Hamilton-Jacobi equation has been widely studied in $\Rn$ or in $\Omega$ with Dirichlet boundary conditions.
In the case of $\Rn$, all solutions are global and their asymptotic behavior depends on the relation between $p$ and $n$ and on the 
asymptotic properties of the initial data (see \cite{AB, BeLa, BSW1, BSW2, BKL, GGK, LaSo, Gil, So07}).
As for the Dirichlet problem, all solutions exist globally if $p\le 2$, whereas 
for $p > 2$ and suitably large initial data, the local classical solution develops singularities in finite time
(see \cite{Al, ABG, So2, HM04, BDL}),
 although solutions may be continued as a global, generalized viscosity solution (see \cite{BaD}). 
These singularities are of gradient blowup type, the function $u$ itself remaining bounded, and are located on some part of the boundary.
Numerous results are available on the gradient blowup sets \cite{SZ, LiSo, Est},
rates \cite{CG, SV, GH, ZL, PS3, QS, AtSo2, MiSo2},
profiles \cite{ARS, CG, SZ, PS1, PS3,  FPS19, MiSo2}
and properties of the weak continuation after blowup \cite{FL, PZ, PS2, QR, MiSo1}.
Other interesting issues have also been investigated, such as
controllability \cite{PZ}, maximal regularity \cite{CiG} and ergodicity 
(see, e.g.,~\cite{SZ, TT, BPT, BQR} and references therein).
We refer \cite{Dl, Ku, FL, Gi, AF, AI, FTW,  At, ZH, ZL2, ZL3, AtBa, LYZ, AtSo1, FLa}
 for gradient blowup studies on the Dirichlet problem for more general, semilinear or quasilinear parabolic equations with nonlinearities involving the gradient.

 On the other hand, although both the Dirichlet and the Neumann problems are important in the context of stochastic control problems
(see \cite{LaLi, BDL, FlSo}), 
there are only few studies for the case of the Neumann problem. 
For $1<p<2$, it was shown in \cite{Dab2} that $u$ exists globally.
Then in \cite{BDab2}, for all $p>1$ but under the restriction that $\Omega$ is convex, it was proved that 
all solutions of \eqref{eqn:HJpar} are global and bounded, and that they 
converge in $C^1$ to a constant as $t\to\infty$, with an upper polynomial bound on the rate of convergence. Namely:
$$\|u(t)-c\|_{C^1}\le C(t+1)^{-\gamma},\quad t\ge 0,$$
with $\gamma=\gamma(n,p)>0$, for some $c\in\R$, $C>0$.
This indicates in particular that the singularity formation phenomena found in the Dirichlet case
for~$p>2$ do not occur for the Neumann problem, at least in convex domains.

In this paper we are interested in the question of global existence, boundedness and asymptotic behavior as $t\to\infty$
for problem \eqref{eqn:HJpar}.
Our results are strongly motivated by the work  \cite{BDab2} and will improve the results therein in two ways:
\vskip 1pt

$\bullet$ We prove {\it exponential} instead of polynomial convergence to a constant;
\vskip 1pt

$\bullet$  We remove the {\it convexity} assumption on $\Omega$ (both for global existence and for convergence).

\vskip 1pt	 
\noindent Moreover, the decay estimate is uniform for all times and with respect to the initial data,
provided $\|\nabla u_0\|_\infty$ remains bounded.
Furthermore, we shall show that such properties continue to hold for a rather large class of gradient nonlinearities. 

\subsection{Main results}
In the rest of the paper,
$$\hbox{$\lambda>0$ denotes
the second eigenvalue of $-\Delta$ with Neumann boundary conditions in $\Omega$.}$$
Our main result on problem \eqref{eqn:HJpar} is the following.

\begin{theorem}
	\label{thm:final}
	Consider problem \eqref{eqn:HJpar} with $p>1$ and $u_0\in X$.
Then 
	\begin{equation}
		\label{eqn:exprate0}
		T=\infty, \quad \displaystyle\sup_{t\ge 0}\|u(t)\|_X<\infty
			\end{equation}
		and there exist constants $c\in\R$, $C>0$ such that
	\begin{equation}
		\label{eqn:exprate}
		\norm{u(t)-c}_X\leq C e^{-\lambda t}, \qquad t\geq 0.
	\end{equation}
	Moreover the constant $C$ is uniform for $\|\nabla u_0\|_\infty$ bounded.
\end{theorem}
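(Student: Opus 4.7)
The plan is to proceed in three stages, with an a priori gradient bound as the main stepping stone. \textbf{(Stages 1--2: gradient bound and global existence.)} The central step is to prove that $\|\nabla u(t)\|_\infty\le M_0$ for every $t\in[0,T)$, with $M_0$ depending only on $\|\nabla u_0\|_\infty$, $\Omega$ and $p$. This is a Bernstein-type estimate: setting $w:=|\nabla u|^2$, a direct computation gives
\[
w_t-\Delta w-p|\nabla u|^{p-2}\nabla u\cdot\nabla w=-2|D^2u|^2\le 0\quad\text{in }\Omega\times(0,T),
\]
so $w$ is a subsolution of a linear parabolic equation with bounded drift. The boundary is the crux: tangential differentiation of $\partial_\nu u=0$ gives $\partial_\nu w=-2\,\mathrm{II}(\nabla u,\nabla u)$ on $\partial\Omega$, where $\mathrm{II}$ is the second fundamental form, which has the right sign for a maximum principle only when $\Omega$ is convex. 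For general $\Omega$, I would pick an auxiliary $\psi\in C^2(\overline\Omega)$ with $\partial_\nu\psi<0$ on $\partial\Omega$ (built by smoothing a function of $\mathrm{dist}(\cdot,\partial\Omega)$) and work with $\tilde w:=e^{K\psi}w$; for $K$ large one obtains $\partial_\nu\tilde w\le 0$ on $\partial\Omega$, while $\tilde w$ still satisfies a parabolic inequality of the same type, with additional bounded zero-order coefficients. The parabolic maximum principle then delivers the $L^\infty$ bound on $|\nabla u|$. Global existence follows quickly: Poincaré--Wirtinger gives $\|u(t)-\bar u(t)\|_\infty\le CM_0$ with $\bar u(t):=|\Omega|^{-1}\int_\Omega u(\cdot,t)\,dx$, and $\bar u'(t)=|\Omega|^{-1}\int_\Omega|\nabla u|^p\,dx\le M_0^p$ yields $|\bar u(t)|\le|\bar u_0|+M_0^pt$, so that $\|u(t)\|_X$ is finite on every compact time interval and the blow-up alternative \eqref{eqn:cont} forces $T=\infty$.

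\textbf{(Stage 3: exponential convergence.)} Set $v:=u-\bar u(t)$, which has zero mean, satisfies Neumann conditions, and solves
\[
v_t-\Delta v=|\nabla v|^p-\overline{|\nabla v|^p}.
\]
Since $|\nabla v|$ is uniformly bounded, parabolic Schauder estimates yield a uniform $C^{2+\alpha,1+\alpha/2}$ bound on $v$ on each time slab $[n,n+1]$, and hence compactness of $\{v(t):t\ge 1\}$ in $C^1(\overline\Omega)$. A first substep is to show $\|v(t)\|_{C^1}\to 0$ via an $\omega$-limit/LaSalle argument: the monotone identity $\bar u'\ge 0$ plays the role of a Lyapunov functional, and any non-trivial limit point $\phi=\lim v(t_n)$ would generate an entire bounded orbit along which $\bar u$ would keep increasing at a positive rate, contradicting the $C^1$-compactness of the original orbit; hence $\nabla\phi\equiv 0$ and $\phi\equiv 0$ (using $\bar\phi=0$). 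Once $M(t):=\|\nabla u(t)\|_\infty\to 0$, the energy identity
\[
\tfrac12\tfrac{d}{dt}\|v\|_2^2=-\|\nabla v\|_2^2+\int_\Omega v\,|\nabla v|^p\,dx,
\]
combined with Poincaré's inequality $\|\nabla v\|_2^2\ge\lambda\|v\|_2^2$ (using $\bar v=0$ and the definition of $\lambda$) and Hölder to dominate the cross term by $o(1)\|\nabla v\|_2^2$, gives $\|v(t)\|_2=O(e^{-(\lambda-\varepsilon)t})$ for every $\varepsilon>0$. A Duhamel bootstrap through
\[
v(t)=\bigl(S(t-t_0)-P_0\bigr)v(t_0)+\int_{t_0}^{t}\bigl(S(t-s)-P_0\bigr)|\nabla v(s)|^p\,ds,
\]
(with $P_0$ denoting the projection onto constants), together with the spectral-gap estimate $\|(S(t)-P_0)f\|_X\le C(1+t^{-1/2})e^{-\lambda t}\|f\|_\infty$ for the Neumann semigroup, upgrades the rate to the sharp value $\lambda$ and transfers it from $L^2$ to the $X$-norm. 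Finally $\bar u'(t)=\overline{|\nabla u|^p}$ is then exponentially integrable, so $\bar u(t)\to c$ for some $c\in\R$, and \eqref{eqn:exprate} follows.

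I expect the main obstacle to be the Bernstein step for non-convex $\Omega$ --- designing the weight $\psi$ so that the boundary contribution has the correct sign while preserving the subsolution structure in the interior --- together with the recovery of the sharp exponent $\lambda$ (instead of $\lambda-\varepsilon$) through the Duhamel iteration, which requires carefully exploiting the spectral gap rather than relying on a Gronwall-type absorption.
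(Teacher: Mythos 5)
Your architecture matches the paper's (Bernstein-type gradient bound with a multiplicative boundary weight, a LaSalle/$\omega$-limit argument for convergence to a constant, and a Duhamel bootstrap with the spectral-gap semigroup estimate to reach the sharp rate $\lambda$), but there is a genuine gap at the central step, the gradient bound for non-convex $\Omega$. When you pass from $w=|\nabla u|^2$ to $\tilde w=e^{K\psi}w$, the drift term $b\cdot\nabla w$ with $b=p|\nabla u|^{p-2}\nabla u$ produces, after rewriting $\nabla w$ in terms of $\nabla\tilde w$, a zero-order term $-K(b\cdot\nabla\psi)\,\tilde w$. Its coefficient is of size $K|\nabla\psi|\,p|\nabla u|^{p-1}$, i.e.\ of order $\tilde w^{(p-1)/2}$ --- it is \emph{not} bounded; it is precisely the quantity you are trying to control. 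The resulting differential inequality for $\max\tilde w$ is superlinear ($y'\lesssim y^{(p+1)/2}$ for $p>1$), so the parabolic maximum principle alone cannot close the argument, and your assertion that the perturbation only introduces ``additional bounded zero-order coefficients'' is where the proof breaks. The repair --- which is exactly what the paper does --- is to \emph{not} discard the Bernstein term $-2|D^2u|^2$ at the outset: keep it, bound it by $-\tfrac{2}{n}(\Delta u)^2=-\tfrac{2}{n}\bigl(|\nabla u|^p-u_t\bigr)^2\le -c|\nabla u|^{2p}+C\|u_t\|_\infty^2$, and observe that $2p>p+1$ for $p>1$, so this absorbs the bad term $\sim|\nabla u|^{p+1}$. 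This in turn requires a prior uniform bound on $\|u_t(t)\|_\infty$ for $t\ge t_0>0$, which you have not established; the paper obtains it by a separate comparison argument showing that $t\mapsto\|u_t(t)\|_\infty$ is nonincreasing.

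Two secondary points. First, the theorem asserts that $C$ in \eqref{eqn:exprate} is uniform for $\|\nabla u_0\|_\infty$ bounded; your LaSalle argument gives convergence without any rate or uniformity, so you still need a quantitative bound on the time at which $\|\nabla u(t)\|_\infty$ enters the small-data regime (the paper gets this from integrating the equation in space--time, $\int_0^t\!\!\int_\Omega|\nabla u|^p\le 2|\Omega|\,\mathrm{osc}(u_0)$, which forces $\|\nabla u(t_3)\|_\infty\le\eps_0$ at some controlled time $t_3$). Second, your Duhamel upgrade from $e^{-(\lambda-\eps)t}$ to $e^{-\lambda t}$ is sound, since $p(\lambda-\eps)>\lambda$ for small $\eps$, and is essentially the paper's small-data trapping argument in a slightly different order.
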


Next consider the more general problem 
	\begin{equation}
		\label{eqn:HJ}
		\left\{
		\begin{aligned}
			u_t-\Lap u & = F(\nabla u),&&x\in \Omega,\ t>0, \\
			\frac{\partial u}{\partial\nu} & = 0,&&x\in\partial \Omega,\ t>0, \\
			u(x,0) & =u_0(x),&&x\in \Omega,
		\end{aligned}
		\right.
	\end{equation}
where 
	\begin{equation}
		\label{eqn:ass0}
		F\in C^1(\Rn;\R),\quad F(0)=0.
			\end{equation}
The local existence-uniqueness result recalled above as well as property \eqref{eqn:cont}, remain valid for problem \eqref{eqn:HJ}
(see Appendix).
We still denote by $u$ and $T$ the maximal classical solution of \eqref{eqn:HJ} and its maximal existence time,
without risk of confusion.
Concerning the nonlinearity $F$, we shall assume:
	\begin{equation}
		\label{eqn:ass4}
\lim_{|z|\to\infty}\frac{|F(z)|}{|z|\sqrt{1+|\nabla F(z)|}}=\infty;
	\end{equation}
		\begin{equation}
		\label{eqn:ass3}
	\hbox{there exist $p>1$ and $\delta>0$ such that 
		$|F(z)|\leq \abs{z}^p$ for all $\abs{z}\leq \delta.$}
	\end{equation}

Our main result on problem \eqref{eqn:HJ} is the following theorem
	(of which Theorem \ref{thm:final} is immediately seen to be a special case).
		
	\begin{theorem}
	\label{thm:finalgen}
	Assume \eqref{eqn:ass0}, \eqref{eqn:ass4}, \eqref{eqn:ass3} and consider problem \eqref{eqn:HJ} with $u_0\in X$.
	\smallskip
	
(i) Then the conclusions  \eqref{eqn:exprate0} and  \eqref{eqn:exprate} of Theorem \ref{thm:final} remain valid.
	\smallskip
	
(ii) Assume in addition that, for some $q>1$ and $c_0>0$,
	\begin{equation}
		\label{eqn:ass5}
		F(z)\ge c_0 |z|^q,\quad z\in\Rn.
		\end{equation}	
	Then the constant $C$ in \eqref{eqn:exprate} is uniform for $\|\nabla u_0\|_\infty$ bounded.
	\end{theorem}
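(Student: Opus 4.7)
The plan is to proceed in three stages: (a) a global uniform-in-time $X$-bound, yielding global existence; (b) the convergence $\|\nabla u(t)\|_\infty\to 0$ together with identification of the limit constant $c$; and (c) the sharp exponential rate $\lambda$, via a bootstrap. Part (ii) will then follow by making Stage~(b) quantitative using \eqref{eqn:ass5}.

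For Stage~(a), I would apply a Bernstein-type argument to $w:=|\nabla u|^2$. Its parabolic equation features a dissipative term $-2|D^2u|^2$ against a reaction term of the form $2\nabla F(\nabla u)\cdot D^2u\cdot\nabla u$, and condition \eqref{eqn:ass4} is shaped precisely so that the latter is absorbed by the former up to a bounded remainder, yielding via the parabolic maximum principle a pointwise bound $\|\nabla u\|_\infty\le K(\|\nabla u_0\|_\infty)$. The genuine obstacle is the boundary: the Neumann condition produces a term $\partial_\nu w=-2\,\mathrm{II}(\nabla u,\nabla u)$ involving the second fundamental form of $\partial\Omega$, with unfavorable sign when $\Omega$ is non-convex. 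To circumvent this I would work with $\tilde w=w-K\eta(x)$, where $\eta\in C^2(\overline\Omega)$ is a fixed auxiliary function with $\partial_\nu\eta\le -c_\Omega<0$ on $\partial\Omega$ (such $\eta$ exists by smoothness of $\partial\Omega$), and $K$ is large enough to flip the boundary sign. Combined with the blow-up alternative \eqref{eqn:cont} and the Poincar\'e--Wirtinger inequality controlling $\|u-\bar u\|_\infty$ by $\|\nabla u\|_\infty$, this gives $T=\infty$ and $\|u(t)\|_X$ bounded on every finite interval.

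For Stage~(b), parabolic Schauder estimates together with the uniform gradient bound imply that $\{u(\cdot,t)-\bar u(t):t\ge 1\}$ is relatively compact in $C^1(\overline\Omega)$, where $\bar u(t)=\fint_\Omega u$. Its $\omega$-limit in $C^1$ consists of mean-zero stationary solutions of $-\Delta\phi=F(\nabla\phi)-\text{const}$ under Neumann boundary conditions, and a LaSalle-type argument (based, e.g., on the energy $\tfrac12\|\nabla u\|_2^2$ combined with the smallness of $F$ near $0$ from \eqref{eqn:ass3}) forces $\nabla\phi\equiv 0$, so that $\|\nabla u(t)\|_\infty\to 0$. Thanks to \eqref{eqn:ass3}, once $\|\nabla u(t)\|_\infty\le\delta$ the integrand in $\bar u(t)-\bar u_0=\int_0^t\fint F(\nabla u)\,ds$ is summable, so the limit $c:=\lim_{t\to\infty}\bar u(t)$ exists in $\R$.

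For Stage~(c), put $v=u-\bar u$, a mean-zero solution of $v_t-\Delta v=F(\nabla u)-\fint F(\nabla u)$ with Neumann conditions. Poincar\'e--Wirtinger $\lambda\|v\|_2^2\le\|\nabla v\|_2^2$ and the energy identity yield
\[
\frac{d}{dt}\|v\|_2^2\le -2\bigl(\lambda-\|v\|_\infty\|\nabla u\|_\infty^{p-2}\bigr)\|v\|_2^2,
\]
whose bracket tends to $\lambda$ by Stage~(b), giving an initial exponential rate $\alpha>0$ arbitrarily close to (but possibly smaller than) $\lambda$, lifted from $L^2$ to $X$ by parabolic smoothing. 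To reach the sharp rate $\lambda$ I would then run a Duhamel bootstrap: since the source satisfies $\|F(\nabla u(s))-\fint F(\nabla u(s))\|_\infty\le C\|\nabla u(s)\|_\infty^p$ and the Neumann semigroup decays like $e^{-\lambda t}$ on mean-zero data (with the usual $t^{-1/2}$ smoothing on the gradient), any rate $\alpha$ is upgraded to $\min(p\alpha,\lambda)$, and since $p>1$ a finite iteration reaches $\lambda$. Combined with the faster decay $|\bar u(t)-c|=O(e^{-p\lambda t})$, this yields \eqref{eqn:exprate}. For part~(ii), \eqref{eqn:ass5} forces $F\ge 0$ and hence $\bar u$ non-decreasing; together with an a priori bound on $\bar u(\infty)-\bar u_0$ in terms of $\|\nabla u_0\|_\infty$ (coming from the $X$-bound of Stage~(a) and the Poincar\'e estimate), it quantifies $\int_0^\infty\fint|\nabla u|^q\,ds$ and hence the time $t_0$ at which $\|\nabla u(t)\|_\infty\le\delta$, propagating through the bootstrap to yield uniformity of $C$. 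The main obstacle I anticipate is Stage~(a) in the non-convex case: closing the Bernstein inequality while handling the boundary term and simultaneously exploiting \eqref{eqn:ass4} in its sharp form requires a delicate balance; a secondary subtlety is that in part~(i) no sign assumption on $F$ is available, so proving that the $\omega$-limit set contains only constants in Stage~(b) will itself require a nontrivial argument.
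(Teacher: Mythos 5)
Your overall architecture (uniform $X$-bound via a Bernstein argument, soft convergence to a constant, small-data exponential decay at rate $\lambda$, and a quantitative smallness time under \eqref{eqn:ass5} for part (ii)) is the same as the paper's, and your sketch of part (ii) is essentially the paper's argument. But your Stage (a) contains a genuine gap, and it sits exactly at the paper's main contribution (removing convexity). For a non-convex domain the Neumann condition yields only $\frac{\partial}{\partial\nu}|\nabla u|^2\le K|\nabla u|^2$ on $\partial\Omega$ (cf.\ \eqref{eqn:RobinBC}): the adverse boundary term is proportional to the unknown $w=|\nabla u|^2$ itself. An additive correction $\tilde w=w-K'\eta$ contributes only the fixed amount $-K'\frac{\partial\eta}{\partial\nu}$ to the normal derivative, so precluding a boundary maximum requires $K'c_\Omega\ge K\sup w$, i.e.\ you must already possess the bound you are trying to prove (and a continuity argument does not rescue this, since the resulting bound on $w$ grows linearly in the assumed bound with a constant that need not be below one). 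The paper's device is instead a \emph{multiplicative} perturbation $h=\psi|\nabla u|^2$, with $\psi>0$ solving the auxiliary Robin problem \eqref{AuxRobin}, $\frac{\partial\psi}{\partial\nu}=-K\psi$: then $\frac{\partial h}{\partial\nu}=\psi\frac{\partial\phi}{\partial\nu}+\phi\frac{\partial\psi}{\partial\nu}\le 0$ exactly, at the price of a zero-order interior term of size $|\nabla F(\nabla u)|\,|\nabla u|^2$ (see \eqref{eqn:globalexeq8}). This is where \eqref{eqn:ass4} actually enters: that term is absorbed using $2\psi|D^2u|^2\ge\frac{2\psi}{n}\bigl(F(\nabla u)-u_t\bigr)^2$ together with a prior uniform bound on $u_t$ obtained by comparison (Proposition~\ref{thm:uac}). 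Your reading of \eqref{eqn:ass4} as balancing $\nabla F(\nabla u)\cdot\nabla(|\nabla u|^2)$ against $-2|D^2u|^2$ is off: that is a first-order transport term in $w$, harmless at a maximum; the hypothesis is needed for the interior term created by the boundary fix.

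A second gap, which you flag yourself, is Stage (b): $\tfrac12\|\nabla u\|_2^2$ is not a Lyapunov functional here, since $\tfrac{d}{dt}\tfrac12\|\nabla u\|_2^2=-\|u_t\|_2^2+\int_\Omega u_t\,F(\nabla u)$ and the cross term has no sign (the equation is non-variational). The paper instead uses $M(t)=\max_{\overline\Omega}u(t)$, which is nonincreasing by comparison, and gets the strict Lyapunov property from the strong maximum principle, forcing the $\omega$-limit set to reduce to the constant $\lim M(t)$. Your Stage (c) bootstrap ($\alpha\mapsto\min(p\alpha,\lambda)$) is a workable, if more roundabout, alternative to the paper's one-shot trapping argument on $N(\tau)=\sup_{s<\tau}e^{\lambda s}\|\nabla u(s)\|_\infty$, which closes at rate exactly $\lambda$ because the Duhamel source already decays like $e^{-p\lambda t}$. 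Finally, in part (ii), passing from the space--time bound on $\int_0^\infty\fint_\Omega|\nabla u|^q$ to an actual time at which $\|\nabla u\|_\infty$ is small requires the interpolation and semigroup-smoothing step (cf.\ \eqref{boundeps3} and the estimate following it), which your sketch elides.
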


	\medskip
	\begin{remark}
		(i) The decay rate \eqref{eqn:exprate} is optimal in general.
	 Indeed, for any $F$ satisfying  \eqref{eqn:ass0}, \eqref{eqn:ass3}, there exist arbitrarily small initial data $u_0\in X$
	 such that
	 $u$ is global and satisfies
	 $$C_1e^{-\lambda t}\le \|u(t)\|_{H^1(\Omega)}\le C_2e^{-\lambda t},\quad t\ge 0$$
	 for some $C_2>C_1>0$ (see \cite[Theorem~51.21]{QS}).
	 Such initial data are found on the stable manifold of the equilibrium $0$, starting close to the direction of the eigenfunction associated with the eigenvalue $\lambda$.
	 On the other hand, faster exponential decay rates may also be obtained by considering higher eigenvalues (see \cite[Theorem~51.21]{QS}).

		\smallskip
		
		(ii) Theorem \ref{thm:finalgen} holds for instance for $F(\nabla u)=\exp(|\nabla u|^q)-1$ with $q>1$.
	We refer to, e.g.,~\cite{FL, FTW, ZH, ZL2, ZL3} for results on the Dirichlet problem with exponential nonlinearities.

		\smallskip

	(iii) The constant $c$ in \eqref{eqn:exprate} is given by the (implicit) relation 
	\be{defc}
	c=\fint_\Omega u_0 + \int_0^\infty \fint_\Omega F(\nabla u(t))dt,
	\ee
	where $\fint$ stands for the average integral (see Proposition~\ref{prop:finalgen3}).
	
		\smallskip
		
	(iv) For problem \eqref{eqn:HJ}, the assumption $F(0)=0$ in \eqref{eqn:ass0} can be removed by means of the simple transformation
	$v=u-F(0)t$, so that $v$ solves the same problem with $F(z)$ replaced by $\tilde F(z)=F(z)-F(0)$.
	Also, in view of the transformation $v=-u$, assumption \eqref{eqn:ass5} can be replaced with $F(z)\le -c_0 |z|^q$.
	\end{remark}

	\begin{remark}\label{locex}
	Local existence-uniqueness for low regularity initial data is out of the scope of this paper.
	For $1<p<2$, this was studied in (optimal) $L^q$ spaces and in spaces of measures for the Cauchy, Dirichlet and Neumann problems;
	see \cite{CLS, An, BSW1, BSW2, BDab, BVD, Dab2}.
	For all $p>1$, local existence-uniqueness was proved for bounded continuous initial data in \cite{GGK, BaD}
	for the Cauchy and Dirichlet problems, and in \cite{BDab2} for the Neumann problem under the restriction that $\Omega$ is convex.
	Regularization estimates as $t\to 0^+$ can also be found in the above works.
		\end{remark}

\subsection{Organization of the paper and main ideas of proofs}

	\medskip
	
 In Section~\ref{sec:globalex}, we establish global existence and $W^{1,\infty}$ boundedness under assumptions
\eqref{eqn:ass0} and \eqref{eqn:ass4} on $F$ (cf.~Proposition~\ref{prop:finalgen}).
 To this end, after deriving some first maximum principle bounds on $u$ and $u_t$
(Proposition~\ref{thm:uac}), we proceed to estimate $|\nabla u|$.
 This is done by a Bernstein technique, using a parabolic equation for a quantity involving $\phi=|\nabla u|^2$. 
If $\Omega$ is convex, then $\phi$ just satisfies the boundary condition $\frac{\partial \phi}{\partial\nu}\le 0$, 
a fact which is crucially used in \cite{BDab2}.
But for general domains, we only have a Robin type condition $\frac{\partial \phi}{\partial\nu}\le K\phi$,
so that we can no longer directly apply the maximum principle to the $\phi$-equation to bound $\nabla u$.
 To cope with this difficulty, some new ideas are needed.
 We first introduce a multiplicative perturbation of $|\nabla u|^2$, replacing $\phi$ by $h=\phi\psi$ for a well-chosen
auxiliary function $\psi>0$.
This change of variable transfers the \textit{bad term} on the boundary to a \textit{bad term} in the interior of the domain (see \eqref{eqn:globalexeq8}). This term can be compensated by means of 
 the absorbing Hessian term $|D^2u|^2$ arising from the Bernstein device, combined with the 
previously obtained uniform estimate of $u_t$.
We note that, when $\Omega$ is convex, different Bernstein-type arguments are also used in \cite{BDab2}
and allow to obtain regularization estimates as $t\to 0^+$ (an issue which, as mentioned in Remark~\ref{locex}, 
is out of the scope of the present paper).
	\smallskip
	
 In Section~\ref{sec:globalex2}, for all global bounded solutions, under the sole assumption
\eqref{eqn:ass0}, we prove convergence (without rate) to a constant 
(cf.~Proposition~\ref{prop:finalgen2}).
 Our proof relies on modifications of ideas from \cite{Dab1, SZ}.
It is based on dynamical systems and invariance principle arguments, using the quantity
$M(t):=\max_{\overline\Omega}u(t)$ as a Liapunov functional, 
the strict Liapunov property being deduced from the strong maximum principle.
 
	\smallskip
 In Section~\ref{sec:conv}, under assumptions \eqref{eqn:ass0} and \eqref{eqn:ass3} on $F$
we establish a global existence and exponential decay result for small initial data
(cf.~Proposition~\ref{prop:finalgen3}). 
The proof is done
by means of semigroup arguments, using suitable gradient estimates on the Neumann semigroup.

	\smallskip
	
 In Section~\ref{sec:conv2}, under the additional assumption \eqref{eqn:ass5}, we show the uniformity of the constant in  \eqref{eqn:exprate}.
 The proof is based on regularity estimates, combined with semigroup and
smoothing arguments.
Theorem \ref{thm:finalgen} (hence Theorem \ref{thm:final}) will then follow by combining the results of Sections~\ref{sec:globalex}--\ref{sec:conv2}.

	\smallskip
	Finally, in appendix, we give a version of the comparison principle suitable to our needs
	and a precise statement of local existence-uniqueness for problem \eqref{eqn:HJ} for initial data $u_0\in X$.
		Although the result is essentially standard, we provide a proof for the convenience of the readers.

\section{Global existence and boundedness}
\label{sec:globalex}

In this section, we establish global existence and $C^1$ boundedness under assumptions
\eqref{eqn:ass0} and~\eqref{eqn:ass4} on $F$.

\begin{proposition}
	\label{prop:finalgen}
	Assume \eqref{eqn:ass0}, \eqref{eqn:ass4} and consider problem \eqref{eqn:HJ} with $u_0\in X$.
	Then 
	\begin{equation}
	\label{prop:finalgen-global}
	\hbox{$T=\infty$ \quad and \quad  $\displaystyle\sup_{t\ge 0}\|u(t)\|_X<\infty$.}
		\end{equation}
	\end{proposition}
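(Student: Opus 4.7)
The plan is to invoke the blowup alternative~\eqref{eqn:cont}: it suffices to prove that $\|u(t)\|_X$ stays bounded on $[0,T)$. The $L^\infty$-bound on $u$ itself and a uniform bound on $u_t$ will come from the maximum principle (comparing $u$ with constant sub/supersolutions, enabled by $F(0)=0$, and applying the same type of argument to $u_t$, which solves a linear parabolic equation with a homogeneous Neumann condition)---this is the content of the preliminary Proposition~\ref{thm:uac}. The real work is to bound $\|\nabla u(t)\|_\infty$ by a Bernstein argument compatible with the non-convexity of $\Omega$.

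Set $\phi:=|\nabla u|^2$. A direct computation gives the Bernstein identity
\begin{equation*}
\phi_t - \Lap\phi \;=\; -2|D^2u|^2 + \nabla F(\nabla u)\cdot\nabla\phi \quad\text{in } \Omega\times(0,T),
\end{equation*}
while differentiating $\partial_\nu u = 0$ tangentially yields $\partial_\nu\phi \le K\phi$ on $\partial\Omega\times(0,T)$, with $K=K(\Omega)\ge 0$ controlled by the principal curvatures of $\partial\Omega$. When $\Omega$ is convex one has $K=0$ and the parabolic maximum principle applies directly (as in \cite{BDab2}); the obstacle here is precisely the case $K>0$, which prevents one from bounding $\phi$ directly from its initial and lateral boundary values.

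To cope with this, my plan is to introduce an auxiliary weight $\psi\in C^2(\adh{\Omega})$ with $\psi>0$ on $\adh{\Omega}$ and $\partial_\nu\psi\le -K\psi$ on $\partial\Omega$ (for example $\psi=e^{\alpha\rho}$ where $\rho$ is a smooth extension of the distance to $\partial\Omega$ with $\partial_\nu\rho=-1$ on the boundary, and $\alpha\ge K$ is large enough). Setting $h:=\phi\psi$, one checks $\partial_\nu h \le (K-\alpha)h \le 0$ on $\partial\Omega$, while in the interior
\begin{equation*}
h_t - \Lap h + 2\frac{\nabla\psi}{\psi}\cdot\nabla h - \nabla F(\nabla u)\cdot\nabla h
 \;=\; -2\psi|D^2u|^2 - \frac{h}{\psi}\,\nabla F(\nabla u)\cdot\nabla\psi + C(x)\,h,
\end{equation*}
with $C = 2|\nabla\psi|^2/\psi^2 - \Lap\psi/\psi$ bounded on $\adh{\Omega}$. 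The Robin defect on the boundary has thus been traded for a bounded ``bad'' potential term $Ch$ in the interior.

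Finally, I analyse the maximum of $h$ on $\adh{\Omega}\times[0,T']$ for $T'<T$. By the sign of $\partial_\nu h$ and Hopf's lemma the maximum is attained either at $t=0$, in which case it is bounded by $\|\psi\|_\infty\|\nabla u_0\|_\infty^2$, or at an interior point $(x_0,t_0)$ with $\nabla h=0$, $\Lap h\le 0$, $h_t\ge 0$. The crucial structural observation is that $\nabla h=0$ turns the gradient-of-$F$ contribution into a \emph{linear} (rather than quadratic) expression in $|\nabla F|$, without any recourse to Young's inequality, giving
\begin{equation*}
2\psi|D^2u|^2 \;\le\; \frac{|\nabla F(\nabla u)|\,|\nabla\psi|}{\psi}\,h + C\,h
 \;\le\; C_1\bigl(1+|\nabla F(\nabla u)|\bigr)h \quad\text{at }(x_0,t_0).
\end{equation*}
Combining this with the elementary lower bound $|D^2u|^2 \ge (\Lap u)^2/n = (u_t - F(\nabla u))^2/n \ge F(\nabla u)^2/(2n) - u_t^2/n$ and the $L^\infty$-bound on $u_t$ from the first step, I would arrive at
\begin{equation*}
|F(\nabla u)|^2 \;\le\; C_2\bigl(1+|\nabla F(\nabla u)|\bigr)|\nabla u|^2 + C_3 \quad\text{at }(x_0,t_0).
\end{equation*}
Assumption~\eqref{eqn:ass4} says exactly that $|F(z)|^2 / (|z|^2(1+|\nabla F(z)|))\to\infty$ as $|z|\to\infty$, which forces $|\nabla u(x_0,t_0)|$ to stay below a constant independent of $T'$; hence $h$, and therefore $|\nabla u|$, is uniformly bounded on $[0,T)$, and \eqref{prop:finalgen-global} follows from the blowup alternative~\eqref{eqn:cont}. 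The main obstacle I anticipate is reconciling two competing requirements on~$\psi$: making $\partial_\nu h\le 0$ on $\partial\Omega$ while keeping the interior overhead produced by~$\psi$ low enough to be absorbed by the Hessian term via the $u_t$ bound; the $\nabla h=0$ trick at the interior maximum is what ultimately makes this compatible with the sharp hypothesis~\eqref{eqn:ass4}.
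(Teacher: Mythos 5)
Your strategy coincides with the paper's in all essentials: the preliminary bounds on $u$ and $u_t$ by comparison with constants (Proposition~\ref{thm:uac}), the Bernstein identity for $\phi=|\nabla u|^2$ with the Robin defect $\partial_\nu\phi\le K\phi$, a positive weight $\psi$ with $\partial_\nu\psi\le-K\psi$ so that $h=\psi\phi$ satisfies $\partial_\nu h\le0$, the lower bound $2\psi|D^2u|^2\ge c\,(F(\nabla u)-u_t)^2$, and assumption \eqref{eqn:ass4} to absorb the $(1+|\nabla F(\nabla u)|)\,|\nabla u|^2$ overhead. Your $\psi=e^{\alpha\rho}$ is a perfectly acceptable substitute for the paper's Robin torsion function, since only $\psi\in C^2(\adh{\Omega})$, $\min_{\adh{\Omega}}\psi>0$ and the boundary inequality are ever used. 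One small correction of emphasis: the paper's route also sees $|\nabla F|$ only linearly --- the drift $\nabla F(\nabla u)\cdot\nabla h$ is carried as a harmless first-order term and never squared, the linear $|\nabla F|$ overhead coming from $\phi\,\nabla F(\nabla u)\cdot\nabla\psi$ in both arguments --- so the ``$\nabla h=0$ at the maximum'' observation is not what makes \eqref{eqn:ass4} sharp.

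The one step where you genuinely diverge, and where there is a gap as written, is the conclusion by pointwise evaluation at a space-time maximum of $h$. Under the standing regularity ($F\in C^1$, $u_0\in X$) one only has $\nabla u\in W^{2,1;m}_{loc}(\Omega\times(0,T))$ for finite $m$ and $\nabla\phi\in C(\adh{\Omega}\times(0,T))$ (cf.\ \eqref{regulu1}--\eqref{regulu3}); since $F$ is not assumed $C^{1,\alpha}$, one cannot bootstrap $h$ to $C^{2,1}$ in the interior, so its second space derivatives exist only a.e.\ and ``$\Lap h\le0$, $h_t\ge0$ at the maximum point'' is not available pointwise. Likewise, the parabolic Hopf lemma at a lateral boundary maximum needs justification for a function that is only a strong subsolution away from $\partial\Omega$ and merely $C^1$ up to it; the boundary case is in fact cheaper than Hopf --- taking $\alpha>K$ strictly gives $\partial_\nu h\le(K-\alpha)h<0$ at a positive boundary maximum, contradicting $\partial_\nu h\ge0$ there --- but the interior case requires either extra smoothness of $F$ or a maximum principle adapted to $W^{2,1;m}$ subsolutions. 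The paper avoids this by running your algebra at \emph{every} point to obtain the a.e.\ differential inequality $h_t-\Lap h+b_1\cdot\nabla h\le-\sigma h+C_3$, and then invoking the Stampacchia-type maximum principle of Proposition~\ref{thm:comp0}, which is tailored to exactly this regularity class. With that substitution your argument closes; everything else, including the final use of \eqref{eqn:ass4}, the need to start the $u_t$ bound at some $t_0>0$, and the appeal to the blow-up alternative \eqref{eqn:cont}, matches the paper.
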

	
	\begin{remark} 
	More precisely, the proof of Proposition~\ref{prop:finalgen} gives the estimate
			\begin{equation}
	\label{barh2}
		\displaystyle\sup_{t\ge t_0}\|\nabla u(t)\|_\infty
		\leq C\bigl(\Omega, F,\|\nabla u(t_0)\|_\infty,\|u_t(t_0)\|_{L^\infty(\Omega)}\bigr),\quad t\in [t_0,T),
		\end{equation}
		for any $t_0\in(0,\infty)$,
		where $C$ remains bounded for bounded values of its last two arguments
		(note that the last one is finite owing to the regularity \eqref{regulu0} of $u$).
		
			\end{remark}
			
We first establish some useful monotonicity properties of $u$ and $u_t$.
	
	\begin{proposition}
		\label{thm:uac}
			Assume \eqref{eqn:ass0}, consider problem \eqref{eqn:HJ} with $u_0\in X$, and set
			$$m(t)=\min_{x\in\overline\Omega} u(t,x),\quad M(t)=\max_{x\in\overline\Omega} u(t,x),\quad
			L(t)=\norm{u_t}_{L^\infty(\Omega)}.$$
	 Then
	 			\begin{equation}
				\label{eqn:monMm}
	 \hbox{$M(t)$ is nonincreasing and $m(t)$ is nondecreasing on $[0,T)$,}
			\end{equation} 
	 and
	 			\begin{equation}
				\label{eqn:monut}
	\hbox{$L(t)$ is nonincreasing on $(0,T)$.}
			\end{equation} 
		In particular
		\begin{equation}
						\label{eqn:bounduu}
			\norm{u(t)}_{L^\infty(\Omega)}\leq \norm{u_0}_{L^\infty(\Omega)}, \quad t\in (0,T).
		\end{equation}
	\end{proposition}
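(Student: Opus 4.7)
The plan is to prove the three parts separately, exploiting two structural features of the problem: the assumption $F(0)=0$ in \eqref{eqn:ass0}, which makes every constant a classical stationary solution of \eqref{eqn:HJ}; and the time-translation invariance, which lets me view $v:=u_t$ as a classical solution of a homogeneous linear parabolic equation. The comparison principle recalled in the appendix will supply the main tool in both cases.

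For the monotonicity of $M$ and $m$ in \eqref{eqn:monMm}, fix $0\le t_0<t_1<T$ and set $K:=M(t_0)$. Since $F(0)=0$, the constant $K$ is a stationary classical solution of \eqref{eqn:HJ} satisfying $\partial_\nu K=0$ on $\partial\Omega$, and $u(\cdot,t_0)\le K$ on $\overline{\Omega}$ by definition. The appendix comparison principle applied on $\overline{\Omega}\times[t_0,t_1]$ (where the relevant coefficients are bounded thanks to \eqref{regulu1b} and $F\in C^1$) then yields $u(\cdot,t)\le K$ on this cylinder, hence $M(t_1)\le M(t_0)$. The symmetric argument with $K=m(t_0)$ gives monotonicity of $m$, and \eqref{eqn:bounduu} follows at once since $-\|u_0\|_\infty\le m(0)\le u(t,\cdot)\le M(0)\le\|u_0\|_\infty$.

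For the monotonicity of $L$ in \eqref{eqn:monut}, I would differentiate \eqref{eqn:HJ} formally in $t$ to see that $v:=u_t$ solves the linear parabolic problem
\begin{equation*}
v_t-\Delta v=\nabla F(\nabla u)\cdot\nabla v\ \text{ in }\Omega\times(0,T),\qquad \partial_\nu v=0\ \text{ on }\partial\Omega\times(0,T),
\end{equation*}
with bounded continuous coefficient $b:=\nabla F(\nabla u)$ on any compact subinterval of $(0,T)$, and crucially \emph{no} zeroth-order term. Consequently, for any $0<t_0\le t_1<T$, the constants $\pm\|v(t_0)\|_\infty$ are respectively super- and sub-solutions, so the Neumann comparison principle forces $\|v(t_1)\|_\infty\le\|v(t_0)\|_\infty$, which is precisely \eqref{eqn:monut}.

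The main obstacle is justifying that $v=u_t$ is a classical solution of the linearized problem above, since \eqref{regulu0} only guarantees $u\in C^{2,1}(\overline{\Omega}\times(0,T))$, one derivative short of what is needed. I would bootstrap via standard interior and boundary parabolic Schauder estimates: on any $\overline{\Omega}\times[t_0,t_1]\subset\overline{\Omega}\times(0,T)$ one may read \eqref{eqn:HJ} as a linear equation for $u$ with Hölder continuous coefficients $-\nabla F(\nabla u)$, giving $u\in C^{3+\alpha,(3+\alpha)/2}$ there, whence $v\in C^{2,1}$ and the comparison principle applies as stated. An alternative route is to work with time difference quotients $[u(\cdot,t+h)-u(\cdot,t)]/h$, which solve an equation of the same form with mean-value coefficients, and to pass to the limit $h\to 0^+$; once the regularity is in place, the monotonicity of $L$ follows immediately.
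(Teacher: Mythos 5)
Your argument for \eqref{eqn:monMm} and \eqref{eqn:bounduu} is exactly the paper's: constants are equilibria because $F(0)=0$, and Proposition~\ref{thm:comp} applied on $\Omega\times(t_0,T)$ with the constant $M(t_0)$ (resp.\ $m(t_0)$) as supersolution (resp.\ subsolution) gives the monotonicity. No issues there.

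For \eqref{eqn:monut}, your primary route (differentiate the equation in $t$ and compare $v=u_t$ with constants) has a genuine regularity gap that your proposed Schauder bootstrap does not close. First, the bootstrap itself stalls: to go from $u\in C^{2+\alpha,1+\alpha/2}$ to $C^{3+\alpha,(3+\alpha)/2}$ you need the right-hand side $F(\nabla u)$ to lie in $C^{1+\alpha,(1+\alpha)/2}$, i.e.\ you need $\nabla F(\nabla u)\,D^2u$ to be H\"older continuous; but \eqref{eqn:ass0} only assumes $F\in C^1$, so $\nabla F(\nabla u)$ is merely continuous and the gain of one derivative is not available. Second, even granting $u\in C^{3+\alpha,(3+\alpha)/2}$, this only yields $u_t\in C^{1+\alpha,(1+\alpha)/2}$ --- one spatial derivative short of the $C^{2,1}$ regularity you invoke to treat $v$ as a classical solution of the linearized Neumann problem; you would in fact need $u\in C^{4+\alpha,2+\alpha/2}$, hence $F\in C^2$ at least. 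So as the main argument this does not work under the stated hypotheses.

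Your fallback via time difference quotients is, however, sound and is essentially the paper's proof in a different packaging: $w_h(t):=[u(t+h)-u(t)]/h$ is $C^{2,1}$ up to the boundary, solves $\partial_t w_h-\Delta w_h=b_h\cdot\nabla w_h$ with $b_h$ bounded on compact time intervals (by \eqref{regulu1b}) and $\partial_\nu w_h=0$, so comparison with the constants $\pm\|w_h(t_0)\|_\infty$ (via Proposition~\ref{thm:comp0} applied to $\pm w_h-\|w_h(t_0)\|_\infty$) and the limit $h\to 0^+$ give \eqref{eqn:monut}. The paper instead avoids the linearized equation altogether: it first shows $u(t_0+t)\le u(t_0)+\ell t$ with $\ell=\|u_t(t_0)\|_\infty$ by checking that the time-independent function $u(t_0,x)$ is a supersolution of the equation for $u(t_0+t)-\ell t$, and then compares the time-shifted solution $u(\cdot+h)-\ell h$ with $u$ itself using the nonlinear comparison principle, obtaining $u(t+h)-u(t)\le \ell h$ for $t>t_0$ before dividing by $h$. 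Both versions rest on the same two ingredients (time-translation invariance plus comparison), so I would accept your proof provided you promote the difference-quotient argument from a remark to the actual proof and discard the Schauder bootstrap.
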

	
\begin{proof}
	Let $t_0\geq0$. We use  the comparison principle, Proposition~\ref{thm:comp}, with the function $u(t_0+\cdot)$ 
and the constant function $M(t_0)$ as a supersolution. It follows that $u(t_0+t,x)\le M(t_0)$ on $\Omega\times(0,T-t_0)$, so that $M(s)\le M(t_0)$ for all $s\in(t,T)$, hence the first part of 
	\eqref{eqn:monMm} follows.
	The second part is similar.
	\smallskip
	
	To prove \eqref{eqn:monut}, fix $t_0\in(0,T)$ and let $\ell=\norm{u_t(t_0)}_{L^\infty(\Omega)}$. 
	Considering $v(t)=u(t_0+t)-\ell t$, we have
		\begin{equation}
		\label{eqn:thmuaceq1}
	\left\{
	\begin{aligned}
		v_t-\Lap v & = F(\nabla v) - \ell,&&x\in \Omega,\ 0<t<T-t_0,\\
		v(0) & = u(t_0), \qquad &&x\in\Omega, \\
		\frac{\partial v}{\partial\nu} & = 0,&&x\in\partial \Omega,\ 0<t<T-t_0.
	\end{aligned}
	\right.
	\end{equation}	
	It is easy to check that the function $\bar v(t,x)=u(t_0,x)$ is a supersolution of \eqref{eqn:thmuaceq1}.
	We deduce from Proposition~\ref{thm:comp} that $v\leq u(t_0)$, hence
	\begin{equation}
		\label{eqn:thmuaceq1b}
		u(t_0+t)\leq u(t_0)+\ell t,\quad x\in \Omega,\ 0<t<T-t_0.
		\end{equation}	
	Now, given $h\in (0,T-t_0)$, we consider $z(t)=u(t+h)-\ell h$. Using \eqref{eqn:thmuaceq1b}, we see that
	$$
	\left\{
	\begin{aligned}
		z_t-\Lap z & = F(\nabla z) && x\in\Omega,\ t_0<t<T-h\\
		z(t_0) & \leq u(t_0) \qquad && x\in\Omega,\\
		\frac{\partial z}{\partial\nu} & = 0 && x\in\partial \Omega,\ t_0<t<T-h. \\
	\end{aligned}
	\right.
	$$
	Using the comparison principle Proposition~\ref{thm:comp} again, we obtain $z\leq u$ 
	in $\Omega\times(t_0,T-h)$. Consequently, 
	$\frac{u(t+h)-u(t)}{h}\leq \ell$ for $t_0<t<T-h$. 
	Letting $h\to 0^+$, 
	we obtain $u_t(t)\leq \ell$ for every $t\in(t_0,T-h)$. Arguing analogously one obtains $u_t(t)\geq - \ell$. 
		This proves \eqref{eqn:monut}.
\end{proof}

		\begin{proof} [Proof of Proposition~\ref{prop:finalgen}]
			\smallskip
	
	{\bf Step 1.} {\it Regularity.}
	 For each $i\in\{1,\dots,n\}$, $z:=\partial_{x_i}u$ is a distributional solution of $z_t-\Delta z=g:=(\nabla F)(\nabla u)\cdot\nabla z$
	in $Q_T$, and we have $g\in L^\infty_{loc}(\overline\Omega\times(0,T))$, owing to $u\in C^{2,1}(\overline\Omega\times(0,T))$ and $F\in C^1$.
		By parabolic regularity (see, e.g.,~\cite[Theorem 48.1 and Remark~48.3(i)]{QS}), it follows that
			\begin{equation}
	\label{regulu1}
	\nabla u\in W^{2,1;m}_{loc}(\Omega\times(0,T)),\quad 1<m<\infty.
	\end{equation}
		Set $\phi=|\nabla u|^2$. By \eqref{regulu0} and \eqref{regulu1}, we note that
			\begin{equation}
	\label{regulu2}
	\phi\in C([0,T);L^m(\Omega))\cap W^{2,1;m}_{loc}(\Omega\times(0,T)),\quad 1<m<\infty,
		\end{equation}
		and
			\begin{equation}
	\label{regulu3}
	\nabla \phi\in C(\overline\Omega\times(0,T)).
		\end{equation}

				{\bf Step 2.} {\it Equation for $|\nabla u|^2$.}
				We claim that
		\begin{equation}
	\label{eqn:globalexeq4}
	\left\{
	\begin{aligned}
		\phi_t-\Lap \phi & = b\cdot \nabla\phi-2|D^2u|^2 \\
		\frac{\partial \phi}{\partial\nu}&\leq K\phi	
	\end{aligned}\right. 
\end{equation}
for some $K=K(\Omega)>0$, where $b=(\nabla F)(\nabla u)$. 

\smallskip

		Taking gradients in \eqref{eqn:HJ}, we have
		\begin{equation}
			\label{eqn:globalexeq2}
			(\nabla u)_t-\nabla(\Lap u)= \nabla (F(\nabla u)).
		\end{equation}
	We use the Bochner identity
$$
		\frac{1}{2}\Lap (|\nabla u|^2)=\nabla u \cdot \nabla (\Lap u)+|D^2u|^2
$$
where $|D^2u|^2=\sum_{i,j}\bigl|\frac{\partial^2 u}{\partial x_i \partial x_j}\bigr|^2$.
Multiplying \eqref{eqn:globalexeq2} by $2\nabla u$ and combining with the last equation, we obtain
$$
\frac{\partial}{\partial t}|\nabla u|^2-\Lap |\nabla u|^2=2\nabla (F(\nabla u))\cdot\nabla u-2|D^2u|^2.
$$
Since $2\nabla (F(\nabla u))\cdot\nabla u = 2\sum_{i,j=1}^n\frac{\partial F}{\partial x_j}\frac{\partial^2 u}{\partial x_i \partial x_j}\frac{\partial u}{\partial x_i}=(\nabla F)(\nabla u)\cdot \nabla(|\nabla u|^2)$, we get the PDE in \eqref{eqn:globalexeq4}.
On the other hand, since $u_\nu=0$ on $\partial\Omega$, it is known (see, e.g.,~\cite[Lemma 4.2]{MiSo0}),
that there exists $K=K(\Omega)>0$ such that
		\begin{equation}
			\label{eqn:RobinBC}
			\frac{\partial}{\partial\nu}|\nabla u|^2\leq  K|\nabla u|^2\hbox{ on $\partial \Omega$.}
				\end{equation}

	{\bf Step 3.} {\it Multiplicative perturbation of $|\nabla u|^2$ and conclusion.}
Now, because of the Robin boundary conditions in \eqref{eqn:RobinBC}, we cannot directly apply the maximum principle to bound $|\nabla u|^2$,
unlike in \cite{BDab2} (where $K=0$ owing to the convexity of $\Omega$).
To overcome this, we introduce the problem
	\begin{equation}
	\label{AuxRobin}
		\left\{
		\begin{aligned}
			-\Lap \psi& = 1,\qquad && x\in\Omega\\
			\frac{\partial \psi}{\partial\nu}& =-K\psi,&& x\in\partial \Omega.
		\end{aligned}\right.
	\end{equation}
		The existence of a unique solution $\psi\in  C^2(\overline\Omega)$ of \eqref{AuxRobin}
		 is classical; see,~e.g., \cite[Theorem 6.31]{GT}.
Moreover, we have
$\sigma=\min_{\overline\Omega}\psi>0$.
Indeed, if $x_0\in \overline\Omega$ is such that $\psi(x_0)=\sigma$,
we necessarily have $x_0\in\partial\Omega$
(since otherwise $1=-\Lap \psi(x_0)\le 0$, a contradiction) and,
by Hopf's lemma, it thus follows that $-K\sigma=\frac{\partial \psi}{\partial\nu}(x_0)<0$. 

We now define $h=\psi \phi$. We have
	\begin{equation}
	\label{hNeumann}
\frac{\partial h}{\partial \nu}=\psi\frac{\partial \phi}{\partial \nu}+\phi\frac{\partial \psi}{\partial \nu}\leq K\phi\psi-K\phi\psi=0,
\quad x\in\partial\Omega,\ 0<t<T.
	\end{equation}
Let us compute the equation satisfied by $h$.
Using \eqref{eqn:globalexeq4} and 
$\nabla \phi \cdot \nabla \psi = \nabla h \cdot\frac{\nabla \psi}{\psi}-\frac{\abs{\nabla \psi}^2}{\psi^2}h$, we get
$$\begin{aligned}
	h_t-\Lap h 
	&= \psi(\phi_t-\Lap \phi)-2\nabla\phi\cdot\nabla\psi-\phi\Lap \psi \\
	&= \psi\left(b\cdot\nabla \phi-2|D^2u|^2\right)-2\frac{\nabla \psi}{\psi}\nabla h+2\frac{\abs{\nabla\psi}^2}{\psi^2}h-\phi\Lap \psi
\end{aligned}$$
hence, since $\psi (b\cdot \nabla \phi)=b\cdot \nabla h - \phi (b\cdot \nabla\psi)$,
\begin{eqnarray}
	h_t-\Lap h 
	&=& \Big(b-2\frac{\nabla \psi}{\psi}\Big)\cdot\nabla h+\phi\left(-\Lap \psi-b\cdot\nabla \psi\right)-2\psi|D^2u|^2+2\frac{\abs{\nabla\psi}^2}{\psi^2}h \notag\\ 
	&=& \Big(b-2\frac{\nabla \psi}{\psi}\Big)\cdot\nabla h-2\psi|D^2u|^2
+\Big(-\frac{\Lap \psi}{\psi}-b\cdot\frac{\nabla \psi}{\psi}+2\frac{\abs{\nabla\psi}^2}{\psi^2}\Big)h. \label{eqn:globalexeq8}
\end{eqnarray}
Pick any $t_0\in(0,T)$. By Proposition~\ref{thm:uac}, we have
$$\sup_{t\in[t_0,T)} \norm{u_t(t)}_{L^\infty(\Omega)}\le M_1:=\|u_t(t_0)\|_{L^\infty(\Omega)},$$
hence
$$
	2\psi|D^2u|^2\geq \frac{2\psi}{n}|\Lap u|^2=\frac{2\psi}{n}\left(F(\nabla u)-u_t\right)^2\ge c_1 |F(\nabla u)|^2-C_1M_1^2,
	\quad (x,t)\in Q_0:=\Omega\times (t_0,T),
$$
for some $c_1,C_1>0$ depending only on $\Omega$ (through the function $\psi$,
since $\psi\in C^2(\adh{\Omega})$ and $\psi\geq \sigma>0$).
Setting $b_1=2\frac{\nabla \psi}{\psi}-b$, it follows that
$$
	h_t-\Lap h + b_1\cdot\nabla h\le C_2\bigl(1+M_1^2+|\nabla F(\nabla u)|\bigr)|\nabla u|^2-c_1 |F(\nabla u)|^2, 
	\quad a.e.~(x,t)\in Q_0,
$$
for some $C_2>0$ depending only on $\Omega$ (again through $\psi$).
By assumption \eqref{eqn:ass4}, there exists $C_3=C_3(\Omega, F,M_1)>0$ such that
$$c_1|F(z)|^2\ge (1+C_2)\bigl(1+M_1^2+|\nabla F(z)|\bigr)z^2-C_3,\quad z\in\Rn.$$
We deduce that
$$	h_t-\Lap h + b_1\cdot\nabla h\le -\abs{\nabla u}^2+C_3\le -\sigma h+C_3, 	\quad a.e.~(x,t)\in Q_0.$$
where $b_1\in L^\infty_{loc}([0,T);L^\infty(\Omega))$ owing to \eqref{regulu1b}.  Therefore,
 $\bar h:=h-K$ with $K=\max(C_3/\sigma,\|h(t_0)\|_\infty)$ satisfies
$$	\bar h_t-\Lap \bar h \le C|\nabla \bar h|, 	\quad a.e.~(x,t)\in Q_0\cap\{\bar h\ge 0\}$$
 and $\bar{h}(t_0)\leq 0$. Since $\bar h$ enjoys the same regularity \eqref{regulu2}-\eqref{regulu3} as $\phi$, 
we may apply the maximum principle in Proposition~\ref{thm:comp0}, which yields $\bar h\le 0$ in $Q_0$.
Recalling \eqref{eqn:cont} and \eqref{eqn:bounduu}, this completes the proof
(and shows estimate \eqref{barh2}).
 \end{proof}

	\section{Convergence to a constant for global bounded solutions}
\label{sec:globalex2}

 In this section, under the sole assumption
\eqref{eqn:ass0}, we prove convergence (without rate) of any global bounded solution to a constant.

	\begin{proposition}
	\label{prop:finalgen2}
	Assume \eqref{eqn:ass0} and consider problem \eqref{eqn:HJ} with $u_0\in X$.
	If \eqref{prop:finalgen-global} holds, then there exists a constant $c\in\R$ such that
$$\lim_{t\to \infty} \norm{u(t)-c}_X = 0.$$
		\end{proposition}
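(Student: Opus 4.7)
The strategy is an invariance-principle argument using $M(t)=\max_{\overline\Omega}u(t,\cdot)$, which by Proposition~\ref{thm:uac} is nonincreasing, as a strict Liapunov functional on the $\omega$-limit set. The strictness will come from the strong maximum principle and Hopf's lemma.

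First, precompactness of the trajectory. Since $\sup_{t\ge 0}\|u(t)\|_X<\infty$ and $F\in C^1$, the right-hand side $F(\nabla u)$ is bounded in $L^\infty(\Omega\times(0,\infty))$. Applying interior and boundary parabolic Schauder estimates to $u_t-\Lap u=F(\nabla u)$ on translated time intervals of length $2$, one obtains a uniform $C^{1+\alpha/2,2+\alpha}$ bound of $u$ on $\overline\Omega\times[1,\infty)$. Hence $\{u(t):t\ge 1\}$ is relatively compact in $C^1(\overline\Omega)$, which for these Lipschitz functions coincides with the $X$-topology.

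Next, fix any sequence $t_n\to\infty$. Passing to a subsequence, I would ensure that $u_n(s,x):=u(t_n+s,x)$ converges in $C^{2,1}_{loc}(\overline\Omega\times\R)$ to some $w$ solving $w_t-\Lap w=F(\nabla w)$ with $\partial_\nu w=0$ on $\partial\Omega$. Since $M(t)$ is nonincreasing and bounded, $M_\infty:=\lim_{t\to\infty}M(t)$ exists; continuity of $\max$ under $C^0$ convergence then gives $\max_{\overline\Omega}w(s,\cdot)\equiv M_\infty$ for all $s\in\R$. Set $V:=M_\infty-w\ge 0$, and, using $F(0)=0$ together with $F\in C^1$, write $F(\nabla w)=F(-\nabla V)=b(s,x)\cdot\nabla V$, where $b(s,x):=-\int_0^1\nabla F(-\sigma\nabla V(s,x))\,d\sigma$ is bounded and continuous on $\overline\Omega\times\R$. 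Thus $V$ satisfies the \emph{linear} parabolic equation $V_t-\Lap V+b\cdot\nabla V=0$ with homogeneous Neumann boundary condition, and $V\ge 0$.

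Now fix $s_0\in\R$ and pick $x_0\in\overline\Omega$ with $V(s_0,x_0)=0$. If $x_0\in\Omega$, the parabolic strong maximum principle forces $V\equiv 0$ on $\overline\Omega\times(-\infty,s_0]$; if $x_0\in\partial\Omega$, then either $V\equiv 0$ on the same set, or Hopf's lemma yields $\partial_\nu V(s_0,x_0)<0$, contradicting the Neumann condition. Either way $w\equiv M_\infty$, so $u(t_n)\to M_\infty$ in $X$; as $M_\infty$ is independent of $(t_n)$, the full orbit converges in $X$, which proves the proposition with $c=M_\infty$. The main obstacle will be this last step: one must justify the classical strong maximum principle and Hopf's lemma for the nonlinear problem by producing the linear reformulation with bounded drift $b$, which in turn requires enough space-time regularity of $w$ up to the boundary and invariance of $\omega(u_0)$ under the semiflow---both resting on the uniform Schauder estimates from the first step and standard continuous dependence.
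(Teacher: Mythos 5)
Your proposal is correct and follows essentially the same route as the paper: uniform Schauder bounds give precompactness, $M(t)=\max_{\overline\Omega}u(t)$ serves as a nonincreasing Liapunov functional, and the limit of time-translates is shown to be the constant $M_\infty$ by a strong maximum principle argument. The only cosmetic difference is that you carry out the strong comparison step by hand (writing $F(\nabla w)=b\cdot\nabla V$ and invoking the linear strong maximum principle and Hopf's lemma), whereas the paper simply cites its strong comparison principle, Proposition~\ref{thm:comp}, whose proof is the same linearization.
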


	\begin{proof} We first observe that, for every $t_0>0$ and $\alpha\in(0,1)$, we have 
	\begin{equation}
	\label{Calpha-bound}
		\norm{u}_{C^{2+\alpha,1+\alpha/2}(\adh{\Omega}\times (t_0,\infty))}<\infty.
	\end{equation}
Indeed, fix a function $\theta\in C^1(\R)$ such that $\theta(s)=0$ for $s\le 0$ and $\theta(s)=1$ for $s\ge t_0$.
For given $\tau\ge 0$, the function $\tilde u_\tau(x,t)=\theta(t-\tau)u(x,t)$ satisfies
	\begin{equation}
	\label{Calpha-bound2}
	\partial_t\tilde u_\tau-\Lap \tilde u_\tau= f_\tau:=\theta_t(t-\tau)u+\theta(t-\tau)F(\nabla u).
		\end{equation}
For each $q\in(1,\infty)$, since 
$\sup_{\tau\ge 0}\|f_\tau\|_{L^q(\Omega\times (\tau,\tau+t_0+1))}<\infty$
and $\tilde u_\tau(\tau)=0$,
it follows from $L^q$ parabolic estimates  (see, e.g.,~\cite[Theorem 48.1]{QS}) that 
$$\sup_{\tau\ge 0}\norm{\tilde u_\tau}_{W^{2,1;q}(\Omega\times (\tau,\tau+t_0+1))}<\infty$$
 hence,
by standard embeddings, 
$\sup_{\tau\ge 0}\norm{\tilde u_\tau}_{C^{1+\alpha,\alpha/2}(\adh{\Omega}\times [\tau,\tau+t_0+1])}<\infty$.
Consequently, 
$$\norm{u,\nabla u}_{C^{\alpha,\alpha/2}(\adh{\Omega}\times [t_0,\infty))}<\infty,$$
for each $t_0>0$.
Going back to \eqref{Calpha-bound2} and applying Schauder parabolic estimates (see, e.g.,~\cite[Theorem 48.2]{QS}), 
we deduce \eqref{Calpha-bound}.

On the other hand, by Proposition~\ref{thm:uac}, $M(t)=\max_{\overline\Omega}{u(t)}$ is non-increasing in $t$ and bounded, hence
	\begin{equation}
	\label{ConvSup}
	\exists \ell:=\lim_{t\to\infty} M(t).
			\end{equation}
	Now pick any sequence of times $t_j\to \infty$ and define
	$u_j(x,t)=u(x,t_j+t)$, which satisfies
$$
	\left\{
	\begin{aligned}
		(u_j)_t-\Lap u_j & = F(\nabla u_j) \qquad && \hbox{in }\Omega\times(0,\infty) \\
		\frac{\partial u_j}{\partial\nu} & = 0 \qquad && \hbox{on }\partial \Omega \times(0,\infty).
	\end{aligned}
	\right.
$$
By \eqref{Calpha-bound},  the functions $u_j$ are uniformly bounded in $C^{2+\alpha,1+\alpha/2}(\Omega\times (0,\infty))$. 
Therefore, by the Ascoli-Arzela Theorem, we obtain a subsequence (not relabeled) $u_j\to z$ in $C^{2,1}_{loc}(\adh{\Omega}\times[0,\infty))$,
hence $z$ satisfies
\begin{equation}
\label{eqzz}
	\left\{
	\begin{aligned}
		z_t-\Lap z & = F(\nabla z) \qquad && \hbox{in } \Omega\times(0,\infty) \\
		\frac{\partial z}{\partial\nu} & = 0 \qquad && \hbox{on }\partial \Omega \times(0,\infty).
	\end{aligned}
\right.
\end{equation}
Moreover, by \eqref{ConvSup}, we have
$\max_{\overline\Omega}z(t)=\ell$ for all $t\geq 0$.
Since $\bar z:=\ell$ is also a solution of \eqref{eqzz}, it follows from the (strong) comparison principle in
Proposition~\ref{thm:comp} that $z\equiv\ell$.
As the limit is independent of the subsequence, the whole sequence converges to the constant $\ell$, and we obtain the desired result.
\end{proof}

\section{Global existence and exponential decay of the gradient for small initial data}
\label{sec:conv}

 In this section, under assumptions \eqref{eqn:ass0} and \eqref{eqn:ass3} on $F$,
we establish global existence and exponential decay for small initial data.

	\begin{proposition}
	\label{prop:finalgen3}
	Assume \eqref{eqn:ass0}, \eqref{eqn:ass3} and consider problem \eqref{eqn:HJ} with $u_0\in X$.
	There exist constants $\eps_0=\eps_0(\Omega,p,\delta)>0$ and $C=C(\Omega)>0$ with the following property.
	If $\|\nabla u_0\|_\infty\le \eps_0$, then
	there exist constants $c\in\R$  and $C>0$  such that
$$		\norm{u(t)-c}_X\leq C\|\nabla u_0\|_\infty e^{-\lambda t}, \quad t\geq 0.$$
Moreover, $c$ is given by formula \eqref{defc}.
\end{proposition}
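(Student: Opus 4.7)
The plan is to establish exponential decay of the gradient by a Duhamel-type bootstrap, and then recover the full $W^{1,\infty}$ convergence from an ODE for the spatial average. Let $\varepsilon=\|\nabla u_0\|_\infty$ and denote the mean-zero projection by $Pv=v-\fint_\Omega v$. The key inputs are the semigroup estimates for the Neumann Laplacian on the mean-zero subspace: for all $t>0$,
\begin{equation*}
\|S(t)Pv\|_\infty\leq Ce^{-\lambda t}\|Pv\|_\infty,\qquad \|\nabla S(t)Pv\|_\infty\leq C\bigl(1+t^{-1/2}\bigr)e^{-\lambda t}\|Pv\|_\infty,
\end{equation*}
together with $\nabla S(t)=\nabla S(t)P$ and the Poincar\'e inequality $\|Pv\|_\infty\leq C\|\nabla v\|_\infty$ on the Lipschitz domain $\Omega$. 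The first bound reflects the spectral gap $\lambda$, and the second combines that gap with short-time heat smoothing.

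Taking gradients in the Duhamel formula $u(t)=S(t)u_0+\int_0^t S(t-s)F(\nabla u(s))\,ds$ gives
\begin{equation*}
\|\nabla u(t)\|_\infty\leq Ce^{-\lambda t}\|\nabla u_0\|_\infty+C\int_0^t\bigl(1+(t-s)^{-1/2}\bigr)e^{-\lambda(t-s)}\|F(\nabla u(s))\|_\infty\,ds.
\end{equation*}
Set $T^\ast=\sup\{t\in[0,T):\|\nabla u(s)\|_\infty\leq\delta\text{ on }[0,t]\}$, which is positive by continuity. On $[0,T^\ast)$ assumption \eqref{eqn:ass3} yields $|F(\nabla u(s))|\leq\|\nabla u(s)\|_\infty^p$, and writing $V(t)=e^{\lambda t}\|\nabla u(t)\|_\infty$ I obtain
\begin{equation*}
V(t)\leq C\varepsilon+C\Bigl(\sup_{[0,t]}V\Bigr)^{\!p}\int_0^t\bigl(1+(t-s)^{-1/2}\bigr)e^{-\lambda s(p-1)}\,ds.
\end{equation*}
Splitting the kernel integral at $t/2$ shows it is bounded uniformly in $t\geq 0$, using $p>1$ for the large-$s$ decay. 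A standard continuity/bootstrap argument then yields $V(t)\leq 2C\varepsilon$ on $[0,T^\ast)$ provided $\varepsilon\leq\varepsilon_0(\Omega,p,\delta)$. Choosing $\varepsilon_0$ with $2C\varepsilon_0\leq\delta/2$ forces $T^\ast=T$, and combining with \eqref{eqn:bounduu} and the blowup alternative \eqref{eqn:cont} yields $T=\infty$ together with $\|\nabla u(t)\|_\infty\leq C\varepsilon e^{-\lambda t}$ for all $t\geq 0$.

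To upgrade to the full $W^{1,\infty}$ bound, set $\bar u(t)=\fint_\Omega u(t)$. Integrating \eqref{eqn:HJ} over $\Omega$ and using the Neumann condition gives $\bar u'(t)=\fint_\Omega F(\nabla u(t))$, which is $O(\varepsilon^p e^{-p\lambda t})$ and hence integrable in time. This defines
\begin{equation*}
c=\fint_\Omega u_0+\int_0^\infty\fint_\Omega F(\nabla u(t))\,dt,
\end{equation*}
matching \eqref{defc}, with $|\bar u(t)-c|\leq C\varepsilon^p e^{-p\lambda t}\leq C\varepsilon e^{-\lambda t}$ for $\varepsilon\leq 1$. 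Since $u(t)-\bar u(t)$ has zero spatial mean, the $L^\infty$ Poincar\'e inequality on $\Omega$ gives $\|u(t)-\bar u(t)\|_\infty\leq C\|\nabla u(t)\|_\infty\leq C\varepsilon e^{-\lambda t}$, and the triangle inequality yields the claim.

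The main technical obstacle is closing the bootstrap uniformly in $t$: one must check that $\int_0^t(1+(t-s)^{-1/2})e^{-\lambda(t-s)}e^{-\lambda s(p-1)}\,ds$ stays bounded as $t\to\infty$, which hinges on the factor $e^{-\lambda s(p-1)}$ supplied by $p>1$, since the $(t-s)^{-1/2}$ kernel alone would not decay at large $t$. A secondary delicacy is the linear initial term $\|\nabla S(t)u_0\|_\infty$ for small $t$, where the naive bound $C(1+t^{-1/2})\|Pu_0\|_\infty$ explodes; here one invokes the Bernstein argument of Section~\ref{sec:globalex} applied to the linear Neumann heat semigroup to obtain the uniform bound $\|\nabla S(t)u_0\|_\infty\leq C\|\nabla u_0\|_\infty$ on $t\in(0,1]$, which combined with the long-time exponential decay yields $\|\nabla S(t)u_0\|_\infty\leq Ce^{-\lambda t}\|\nabla u_0\|_\infty$ for every $t\geq 0$.
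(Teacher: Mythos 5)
Your proposal is correct and follows essentially the same route as the paper: the Duhamel formula combined with the spectral-gap semigroup estimates of Lemma~\ref{lemma:sg_bounds} (including the Bernstein-based short-time bound $\norm{\nabla S(t)v}_\infty\le C\norm{\nabla v}_\infty$), a continuity/bootstrap argument closed via assumption \eqref{eqn:ass3} for small $\|\nabla u_0\|_\infty$, and then the spatial-average ODE plus the $L^\infty$ Poincar\'e inequality to identify $c$ as in \eqref{defc} and obtain the full $X$-convergence. The two delicate points you flag (uniform boundedness of the convolution kernel integral and the $t\to 0^+$ behavior of $\nabla S(t)u_0$) are exactly the ones the paper addresses, and in the same way.
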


 In view of the proofs of   Propositions~\ref{prop:finalgen3} and \ref{prop:finalgen4}, we first collect some relevant linear estimates on the Neumann heat semigroup $S(t)$ on $\Omega$.
	A proof is given in Appendix.
	
	\begin{lemma}
		\label{lemma:sg_bounds}
		\begin{enumerate}
			\item For all $v\in  L^\infty(\Omega)$ and $t>0$, we have
				\begin{equation}
		\label{eqn:sg_boundseq1cbis}
		\norm{S(t)v}_\infty \leq \,\norm{v}_\infty,
	\end{equation}
			\begin{equation}
				\label{eqn:sg_boundseq1}
				\norm{\nabla S(t)v}_\infty \leq C_1(\Omega)\bigl(1+t^{-1/2}\bigr)e^{-\lambda t}\norm{v}_\infty.
			\end{equation}
								\item For all $v\in X$ and $t>0$, we have
			\begin{equation}
				\label{eqn:sg_boundseq3}
				\norm{\nabla S (t)v}_\infty \leq C_2(\Omega) e^{-\lambda t}\norm{\nabla v}_\infty,
			\end{equation}
						\begin{equation}
						\label{eqn:sg_boundseq31}
				\norm{S (t)v}_X\leq C_3(\Omega) \norm{v}_X.
			\end{equation}
				\item Let $m\in(n,\infty)$. For all $v\in W^{1,m}(\Omega)$ and $t>0$, we have 
			\begin{equation}
				\label{eqn:sg_boundseq4}
				\norm{\nabla S(t)v}_\infty \leq C_4(\Omega,m)\bigl(1+t^{-1/2}\bigr)e^{-\lambda t} \norm{\nabla v}_m.
			\end{equation}
		\end{enumerate}
	\end{lemma}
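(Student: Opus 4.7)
All four bounds reduce to two ingredients: the \textit{mean decomposition} $v=\bar v+w$ with $\bar v=\fint_\Omega v$ and $w=v-\bar v$, exploiting $S(t)\mathbf 1=\mathbf 1$ (so $S(t)v=\bar v+S(t)w$ and in particular $\nabla S(t)v=\nabla S(t)w$), and the \textit{spectral gap}
$$\|S(t)w\|_{L^2(\Omega)}\le e^{-\lambda t}\|w\|_{L^2(\Omega)},\qquad t\ge 0,$$
for every $w\in L^2(\Omega)$ with $\fint_\Omega w=0$. The latter is immediate from the spectral expansion of $-\Delta$ with Neumann boundary conditions, whose eigenfunctions orthogonal to constants correspond to eigenvalues $\ge\lambda$. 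The decay factor $e^{-\lambda t}$ in each estimate always originates from this spectral gap applied to $w$, while the polynomial $(1+t^{-1/2})$ factors come from standard short-time smoothing.

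\textbf{Proof of (i).} The contraction \eqref{eqn:sg_boundseq1cbis} is the parabolic maximum principle. For \eqref{eqn:sg_boundseq1} I split times. For $0<t\le 1$, classical Neumann heat-kernel smoothing (e.g.\ Gaussian upper bounds on $\nabla_x p(x,y,t)$ in $C^{2+\alpha}$ domains, or analyticity of the Neumann semigroup on $C(\overline\Omega)$) gives $\|\nabla S(t)v\|_\infty\le Ct^{-1/2}\|v\|_\infty$, and since $e^{-\lambda t}\ge e^{-\lambda}$ on $[0,1]$, the form $C(1+t^{-1/2})e^{-\lambda t}\|v\|_\infty$ follows. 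For $t\ge 1$, I write $\nabla S(t)v=\nabla S(1)\bigl[S(t-1)w\bigr]$. Combining the smoothing bound $\|\nabla S(1)\phi\|_\infty\le C\|\phi\|_{L^2}$ with the spectral gap yields
$$\|\nabla S(t)v\|_\infty\le C\|S(t-1)w\|_{L^2}\le Ce^{-\lambda(t-1)}\|w\|_{L^2}\le C|\Omega|^{1/2}e^{-\lambda t}\|v\|_\infty.$$

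\textbf{Proofs of (ii) and (iii).} For \eqref{eqn:sg_boundseq3} the scheme is identical: for $t\ge 1$ I use the same long-time argument together with $\|w\|_{L^2}\le|\Omega|^{1/2}\|w\|_\infty\le C(\Omega)\|\nabla v\|_\infty$, which is a Poincaré-type oscillation bound valid on the bounded connected domain $\Omega$; for $0<t\le 1$ I invoke the uniform bound $\|\nabla S(t)v\|_\infty\le C\|\nabla v\|_\infty$ discussed below. The bound \eqref{eqn:sg_boundseq31} then follows by adding \eqref{eqn:sg_boundseq1cbis} and \eqref{eqn:sg_boundseq3}. For \eqref{eqn:sg_boundseq4} with $m>n$, Morrey-Poincaré gives $\|w\|_\infty\le C(\Omega,m)\|\nabla v\|_m$ (from $W^{1,m}(\Omega)\hookrightarrow C^{1-n/m}(\overline\Omega)$ combined with Poincaré), and applying \eqref{eqn:sg_boundseq1} to $w$ (since $\nabla S(t)v=\nabla S(t)w$) yields \eqref{eqn:sg_boundseq4}.

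\textbf{Main obstacle.} The only delicate point is the uniform short-time bound $\|\nabla S(t)v\|_\infty\le C\|\nabla v\|_\infty$ on $t\in(0,1]$ needed for \eqref{eqn:sg_boundseq3}, because $\nabla$ does not commute with the Neumann semigroup and $\nabla v$ has no natural boundary condition. I would establish it by a Bernstein-type argument in the very spirit of Step~2 of Proposition~\ref{prop:finalgen}: for $u=S(t)v$ the function $\phi=|\nabla u|^2$ satisfies $\phi_t-\Delta\phi=-2|D^2u|^2\le 0$ in $\Omega$ with the Robin-type boundary inequality $\partial_\nu\phi\le K\phi$, and the multiplicative perturbation $h=\psi\phi$, with $\psi$ the auxiliary function from \eqref{AuxRobin}, restores $\partial_\nu h\le 0$ while keeping a manageable interior equation (now with no forcing since $F\equiv 0$). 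The maximum principle then gives $\|h(t)\|_\infty\le\|h(0)\|_\infty$, whence $\|\nabla S(t)v\|_\infty^2\le(\max\psi/\min\psi)\|\nabla v\|_\infty^2$. Alternatively, one can appeal to the analyticity of the Neumann heat semigroup on $L^q(\Omega)$ and interpolate, but the Bernstein route is closer in spirit to the rest of the paper.
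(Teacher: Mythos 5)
Your proposal is correct and follows essentially the same route as the paper: mean decomposition plus the $L^2$ spectral gap and semigroup regularization for large times, heat-kernel smoothing for short times, and the Bernstein argument with the auxiliary Robin function $\psi$ for the uniform short-time bound $\norm{\nabla S(t)v}_\infty\le C\norm{\nabla v}_\infty$. One small correction: since $-\Lap\psi=1$, the equation for $h=\psi|\nabla u|^2$ carries a zeroth-order term with positive coefficient, so the maximum principle yields $\norm{h(t)}_\infty\le e^{Ct}\norm{h(0)}_\infty$ rather than $\norm{h(t)}_\infty\le\norm{h(0)}_\infty$ (the paper works with $z=e^{-Ct}h-\norm{h(0)}_\infty$ for this reason), which is harmless on $t\in(0,1]$.
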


\begin{proof}[Proof of Proposition~\ref{prop:finalgen3}]
		Assume that $\eps=\norm{\nabla u_0}_{\infty}\le \eps_0$, where $\eps_0=\eps_0(\Omega,F)>0$ will be chosen below.
		\smallskip
		
			{\bf Step 1.} {\it Trapping region and exponential decay of the gradient.}
		We first claim that
			\begin{equation}
		\label{eqn:convexpeq2a}
		T=\infty\quad\hbox{and}\quad \norm{\nabla u(t)}_\infty\le C\eps e^{-\lambda t},\quad t\ge 0.
			\end{equation}
			Here and in the rest of the proof, $C$ denotes a generic positive constant depending only on $\Omega$.
		Using the representation formula
			\begin{equation}
		\label{eqn:convexpeq2}
		u(t)=S(t)u_0+\int_0^t S(t-s)F(\nabla u(s))ds
			\end{equation}
	and Lemma~\ref{lemma:sg_bounds}, we have
	\begin{equation}
		\label{eqn:convexpeq2b}
			\begin{aligned}
			\norm{\nabla u(t)}_\infty & \leq \norm{\nabla S(t)u_0}_\infty + \int_0^t \norm{\nabla S(t-s)F(\nabla u(s))}_\infty ds \\
			& \le C_1e^{-\lambda t}\norm{\nabla u_0}_{\infty}
			+C_1\int_0^t e^{-\lambda(t-s)}\bigl(1+(t-s)^{-1/2}\bigr)\norm{F(\nabla u(s))}_\infty ds.
					\end{aligned}
	\end{equation}
Assume $\eps_0<(2C_1)^{-1}\delta$ (where $\delta$ is as in \eqref{eqn:ass3}) and set 
$$N(\tau)\defeq \sup_{s\in (0,\tau)} e^{\lambda s}\norm{\nabla u(s)}_\infty,\ \tau\in (0,T)
\quad\hbox{and}\quad
J=\bigl\{\tau\in (0,T),\ N(\tau)\le 2 C_1\eps\bigr\}.$$
	By continuity, it follows from \eqref{eqn:convexpeq2b} that $J\ne\emptyset$.
	Let $T_0=\sup J$ and assume for contradiction that $T_0<T$, hence $N(T_0)=2C_1\eps$ by continuity. 
	For all $t\in(0,T_0]$, we then have 
	$$\norm{\nabla u(t)}_\infty\le N(T_0)e^{-\lambda t}\le 2C_1\eps e^{-\lambda t}\le \delta$$
	 hence, by \eqref{eqn:ass3},
	\begin{equation}
		\label{eqn:convexpeq2c}
		|F(\nabla u(t))|\le |\nabla u(t)|^p\le N^p(T_0)e^{-p\lambda t}\le (2C_1\eps)^p  e^{-p\lambda t},\quad 0\le t<T_0.
			\end{equation}
It follows from \eqref{eqn:convexpeq2b} that
		$$e^{\lambda t}\norm{\nabla u(t)}_\infty \le C_1\eps
					+C_1(2C_1\eps)^p\int_0^t \bigl(1+(t-s)^{-1/2}\bigr)e^{-\lambda (p-1)s} ds.
$$
On the other hand, there exists $C_2=C_2(p,\Omega)>0$ such that, for all $t\ge 0$,
$$\int_0^t \bigl(1+(t-s)^{-1/2}\bigr)e^{-\lambda (p-1)s} ds
\le 2\int_0^{(t-1)}  e^{-\lambda (p-1)s}   ds+2\int_{(t-1)}^t (t-s)^{-1/2} ds\le C_2.$$
Therefore,
$$2C_1\eps=N(T_0)\le C_1\eps + C_1C_2 (2C_1\eps)^p
=C_1\eps\bigl(1+ (2C_1)^pC_2 \eps^{p-1}\bigr)$$
hence $1\le (2C_1)^pC_2 \eps^{p-1}$.
Further assuming $\eps_0<((2C_1)^pC_2)^{-1/(p-1)}$, we reach a contradiction.
Consequently, $T_0=T$ and, recalling \eqref{eqn:cont} and \eqref{eqn:bounduu}, we deduce \eqref{eqn:convexpeq2a}.

\smallskip
		{\bf Step 2.} {\it Definition of the constant $c$ and $L^\infty$ convergence.}
We set
$$
	c \defeq \fint_\Omega u_0 + \int_0^\infty \fint_\Omega F(\nabla u(t))dt.
$$
Note that $c$ is well defined, since \eqref{eqn:convexpeq2a} guarantees the absolute convergence of the second integral.
For all $t\ge 0$, integrating \eqref{eqn:HJ} in space and using \eqref{eqn:convexpeq2c}, 
we obtain 
$$
		\Bigl|c-\fint_\Omega u(t)\Bigr|=\frac{1}{|\Omega|}\Bigl|\int_t^\infty \int_\Omega F(\nabla u(s))ds\Bigr|
		\le (2C_1\eps)^p\int_t^\infty  e^{-p\lambda s}ds\le  (p\lambda)^{-1}(2C_1\eps)^p e^{-p\lambda t}.
$$
Since, on the other hand, 
$$
	\norm{u(t)-\fint u(t)}_\infty\leq  C \norm{\nabla u(t)}_\infty \leq 2CC_1\eps e^{-\lambda t},
$$
we deduce that
$$
\begin{aligned}
	\norm{u(t)-c}_\infty
	&\leq \norm{c-\fint u(t)}_\infty+\norm{u(t)-\fint u(t)}_\infty 
	\leq  (p\lambda)^{-1}(2C_1\eps)^p e^{-p\lambda t}+2CC_1\eps e^{-\lambda t} 
\leq C\eps e^{-\lambda t}.
\end{aligned}
$$
This completes the proof.
\end{proof}

\section{Uniformity of the constant $C$ and proof of Theorem \ref{thm:finalgen}}
\label{sec:conv2}

Based on the previous sections, we can give the:
\smallskip

\begin{proof}[Proof of Theorem \ref{thm:finalgen}(i)]
It is a direct consequence of Propositions~\ref{prop:finalgen}, \ref{prop:finalgen2}, \ref{prop:finalgen3}.
\end{proof}

The goal of this section is then to show the uniformity of the constant $C$ in \eqref{eqn:exprate}.
We shall prove the following proposition, which implies Theorem \ref{thm:finalgen}(ii).

	\begin{proposition}
	\label{prop:finalgen4}
	Consider problem \eqref{eqn:HJ} under assumptions \eqref{eqn:ass0}--\eqref{eqn:ass5}, with $u_0\in X$.
	Then the constant $C$ in  \eqref{eqn:exprate}  is uniform for $u_0$ in bounded sets of $X$.
\end{proposition}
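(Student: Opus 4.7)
The plan is to fix $R>0$, consider $u_0\in X$ with $\|u_0\|_X\le R$, and produce a time $\tau^*=\tau^*(R)>0$ at which the gradient falls below the smallness threshold $\eps_0=\eps_0(\Omega,p,\delta)$ of Proposition~\ref{prop:finalgen3}. Applying that proposition to the shifted solution $w(s)=u(\tau^*+s)$ will then yield
$$\|u(t)-c\|_X\le C(\Omega)\,\|\nabla u(\tau^*)\|_\infty\,e^{-\lambda(t-\tau^*)}\le C(\Omega)\,\eps_0\,e^{\lambda\tau^*(R)}\,e^{-\lambda t},\qquad t\ge\tau^*(R),$$
while for $t\in[0,\tau^*(R)]$ a trivial $X$-bound combined with $e^{-\lambda t}\ge e^{-\lambda\tau^*(R)}$ closes the argument. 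The constant $c$ produced by Proposition~\ref{prop:finalgen3} applied from $\tau^*$ coincides with the one given by \eqref{defc} at $t=0$: indeed, integrating \eqref{eqn:HJ} in $x$ and using the Neumann condition yields $\frac{d}{dt}\fint_\Omega u=\fint_\Omega F(\nabla u)$, so that $\fint_\Omega u(\tau^*)+\int_{\tau^*}^\infty\fint_\Omega F(\nabla u)\,dt=\fint_\Omega u_0+\int_0^\infty\fint_\Omega F(\nabla u)\,dt$.

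The first task is to obtain $R$-uniform regularity. A standard local existence argument (using parabolic smoothing together with \eqref{eqn:bounduu}) provides $\tau_0=\tau_0(R)>0$ with $\|\nabla u(\tau_0)\|_\infty+\|u_t(\tau_0)\|_\infty\le C(R)$; estimate \eqref{barh2} then gives $\sup_{t\ge 0}\|u(t)\|_X\le K_1(R)$. Re-running the Schauder bootstrap from the proof of Proposition~\ref{prop:finalgen2} with these $R$-uniform inputs yields $\|u\|_{C^{2+\alpha,1+\alpha/2}(\overline\Omega\times[1,\infty))}\le K_2(R)$; in particular $\nabla u(t)$ is uniformly bounded in $C^\alpha(\overline\Omega;\Rn)$ for $t\ge 1$, uniformly for $\|u_0\|_X\le R$.

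Next I would exploit assumption \eqref{eqn:ass5} to locate $\tau^*$. Since $F\ge 0$, integrating \eqref{eqn:HJ} in $x$ gives $\frac{d}{dt}\int_\Omega u=\int_\Omega F(\nabla u)\ge c_0\int_\Omega|\nabla u|^q$; integrating in time and using $\|u(t)\|_\infty\le R$ yields
$$c_0\int_0^\infty\|\nabla u(t)\|_{L^q(\Omega)}^q\,dt\le 2R|\Omega|.$$
For any $T_1>0$ there thus exists $\tau^*\in[1,1+T_1]$ with $\|\nabla u(\tau^*)\|_{L^q}^q\le 2R|\Omega|/(c_0 T_1)$. An elementary interpolation---if $v\in C^\alpha(\overline\Omega;\Rn)$ attains its maximum $M=\|v\|_\infty$ at some $x_0$, then $|v|\ge M/2$ on the ball of radius $r\sim (M/[v]_{C^\alpha})^{1/\alpha}$, giving $\|v\|_\infty\le C(\Omega,n,\alpha,q)\,[v]_{C^\alpha}^{n/(n+\alpha q)}\,\|v\|_{L^q}^{\alpha q/(n+\alpha q)}$---combined with the previous paragraph provides $\|\nabla u(\tau^*)\|_\infty\le C(R)\,T_1^{-\beta}$ for some $\beta>0$. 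Choosing $T_1=T_1(R)$ large enough that $\|\nabla u(\tau^*)\|_\infty\le\eps_0$ completes the reduction to Proposition~\ref{prop:finalgen3}.

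The main obstacle I expect is this last step: converting the $L^q$-dissipation identity coming from \eqref{eqn:ass5} into pointwise smallness of the gradient at a uniformly controlled time. The interpolation itself is elementary, but it relies crucially on the $R$-uniform Schauder bound established in the second paragraph, without which no decay rate in $T_1$ would be available. A secondary technical point is tracking the $R$-dependence through the proof of Proposition~\ref{prop:finalgen}: this reduces to bounding $\|u_t(\tau_0)\|_\infty$ at some $\tau_0=\tau_0(R)>0$, which is routine parabolic smoothing since $\|u\|_\infty\le R$ is already known a priori from \eqref{eqn:bounduu}.
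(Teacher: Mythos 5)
Your proposal is correct and follows the same skeleton as the paper's proof: establish an $R$-uniform global $W^{1,\infty}$ bound via the local theory plus \eqref{barh2}; integrate the equation and use \eqref{eqn:ass5} together with \eqref{eqn:bounduu} to get $c_0\int_0^\infty\|\nabla u\|_{L^q}^q\,dt\le C(R)$; locate a time $\tau^*\le \tau^*(R)$ at which $\|\nabla u(\tau^*)\|_\infty\le\eps_0$; invoke Proposition~\ref{prop:finalgen3} from $\tau^*$ onward; and absorb the initial interval $[0,\tau^*]$ using the uniform bounds and $e^{-\lambda t}\ge e^{-\lambda\tau^*(R)}$. The one step where you genuinely diverge is the conversion of time-integrated $L^q$ smallness of the gradient into pointwise smallness at a controlled time. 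The paper interpolates $L^q$ against the uniform $L^\infty$ bound to get smallness in $L^m$ for some $m>n$ at some time $t_1$, and then runs a Duhamel/semigroup smoothing argument (using the $W^{1,m}\to W^{1,\infty}$ gradient estimate \eqref{eqn:sg_boundseq4} and the linear growth bound $|F(\nabla u)|\le D_4|\nabla u|$) to upgrade this to $L^\infty$ smallness at a slightly shifted time $t_1+t_2$. You instead first upgrade the uniform $W^{1,\infty}$ bound to a uniform $C^{2+\alpha,1+\alpha/2}$ bound for $t\ge 1$ by re-running the $L^q$--Schauder bootstrap of Proposition~\ref{prop:finalgen2} with $R$-uniform inputs, and then use the elementary interpolation $\|v\|_\infty\le C[v]_{C^\alpha}^{n/(n+\alpha q)}\|v\|_{L^q}^{\alpha q/(n+\alpha q)}$ (valid on a domain with the interior measure-density property, as here) to get smallness of $\|\nabla u(\tau^*)\|_\infty$ directly at the time $\tau^*$ where the $L^q$ norm is small. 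Both mechanisms are sound; yours trades the semigroup machinery of Lemma~\ref{lemma:sg_bounds}(3) for the heavier (but already available and uniform) Schauder estimate, and is arguably more self-contained at that point, while the paper's smoothing step needs only the $W^{1,\infty}$ bound. Your observation that the limit constant $c$ obtained from the shifted application of Proposition~\ref{prop:finalgen3} agrees with \eqref{defc}, via $\frac{d}{dt}\fint_\Omega u=\fint_\Omega F(\nabla u)$, is also correct, and your bound $|c|\le\|u(\tau^*)\|_\infty+C(\Omega)\eps_0$ suffices to close the estimate on $[0,\tau^*]$.
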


\begin{proof}
Let $\kappa>0$ and let $u_0\in X$ be such that $\norm{u_0}_{X}\le\kappa$.
In this proof, we denote by $D_1, D_2, \dots$ various positive constants which depend on $u_0$ through $\kappa$ only
(and may depend on $\Omega,F$).
By the small time estimate \eqref{eqn:mild2}, there exist $\varepsilon=\varepsilon(\kappa)>0$ such that 
	\begin{equation}
		\label{boundeps1}
		\sup_{t\in[0,\eps]} \|u(t)\|_X\le D_1.
	\end{equation}
 By the argument in the first part of the proof of Proposition~\ref{prop:finalgen2}, it follows that 
 $\|u_t(\eps)\|_\infty\le D_2$.
Using estimate \eqref{barh2} with the choice $t_0=\eps$ and \eqref{boundeps1}, we deduce that 
	\begin{equation}
		\label{boundeps2}
		\sup_{t>0} \|u(t)\|_X\le D_3.
			\end{equation}

On the other hand, integrating \eqref{eqn:HJ} in space and using \eqref{eqn:bounduu} and assumption \eqref{eqn:ass5}, we have
$$ \begin{aligned}
	c_0\int_0^t \int_\Omega |\nabla u|^q 
	&\le  \int_0^t \int_\Omega F(\nabla u) =  \int_\Omega u(t) - \int_\Omega u_0 \\
&\le |\Omega|(\sup_\Omega u(t) - \inf_\Omega u_0)
\le |\Omega|(\sup_\Omega u_0 - \inf_\Omega u_0)\le 2|\Omega|\kappa.
		\end{aligned}		$$
We deduce that, for all $t>0$,
$\inf_{s\in(0,t)} \|\nabla u(s)\|^q_{L^q(\Omega)}\le \frac{2|\Omega|\kappa}{c_0 t}$.
 For $m\in(q,\infty)$, 
interpolating with \eqref{boundeps2} and denoting by $\|\cdot\|_m$ the norm in $L^m(\Omega)$, we obtain
	\begin{equation}
		\label{boundeps3}
		\inf_{s\in(0,t)} \|\nabla u(s)\|_{L^m(\Omega)}\le \Bigl(\frac{2|\Omega|\kappa}{c_0 t}\Bigr)^{1/m}D_3^{1-q/m},
		\quad t>0.
					\end{equation}

We next proceed to apply a smoothing argument to improve the $L^m$ norm to $L^\infty$ in \eqref{boundeps3}. 
First note that \eqref{boundeps2} and \eqref{eqn:ass0} imply
	\begin{equation}
		\label{boundeps2b}
		|F(\nabla u(s))|\le D_4|\nabla u(s)|,\quad s\ge 0.
						\end{equation}
Fix some $m\in (\max(n, q),\infty)$ and let $t_1>0$ and $t\in(0,1]$. By the variation of constants formula \eqref{eqn:convexpeq2}
and the semigroup estimates \eqref{eqn:sg_boundseq1} and \eqref{eqn:sg_boundseq4}, 
it follows that
$$
			\begin{aligned}
			\norm{\nabla u(t_1+t)}_\infty & \leq \norm{\nabla S(t)u(t_1)}_\infty + \int_0^t \norm{\nabla S(t-s)F(\nabla u(t_1+s))}_\infty ds \\
			& \le Ct^{- 1/2}\norm{\nabla u(t_1)}_m
			+C_1D_4\int_0^t (t-s)^{-1/2}\|\nabla u(t_1+s)\|_\infty ds,
					\end{aligned}
$$
where $C_1=C_1(\Omega,m)$,
hence
$$		t^{ 1/2}\norm{\nabla u(t_1+t)}_\infty  \le C\norm{\nabla u(t_1)}_m
			+C_1D_4t^{1/2} \int_0^t (t-s)^{-1/2}\|\nabla u(t_1+s)\|_\infty ds.$$
Let $t_2\in(0,1]$. Taking supremum over $t\in(0,t_2]$, it follows that $\Lambda(t_2):=\sup_{{s}\in(0,t_2)} s^{1/2}\|\nabla u(t_1+s)\|_\infty$ satisfies
$$			\Lambda(t_2) \leq C\norm{\nabla u(t_1)}_m+C_1D_4\Lambda(t_2) \sup_{t\in(0,t_2]} t^{ 1/2}\int_0^t (t-s)^{-1/2}s^{-{ 1/2}}ds
 \leq C\norm{\nabla u(t_1)}_m+C_1D_4t_2^{1/2}\Lambda(t_2).$$
 Taking $t_2=\min\bigl(1,(2C_1D_4)^{-2}\bigr)$, we obtain
$$t_2^{ 1/2}\norm{\nabla u(t_1+t_2)}_\infty \le \Lambda(t_2) \le
 2C\norm{\nabla u(t_1)}_m$$
hence, combining with \eqref{boundeps3},
 $$\inf_{s\in(t_2,t_1+t_2)} \|\nabla u(s)\|_\infty 
 \le 2Ct_2^{-{1/2}}\Bigl(\frac{2|\Omega|\kappa}{c_0 t_1}\Bigr)^{1/m}D_3^{1-q/m}
 \le D_5t_1^{-1/m},\quad t_1>0.$$

 Now, we choose 
$t_1=(D_5/\eps_0)^m$, where $\eps_0=\eps_0(F,\Omega)$ is given by Proposition~\ref{prop:finalgen3}.
 There must exist $t_3\in (t_2,t_1+t_2)$ such that $\|\nabla u(t_3)\|_\infty \le\eps_0$. Then $t_3\leq 1+(D_5/\varepsilon_0)^m\eqdef D_6$.
Proposition~\ref{prop:finalgen3}, applied after shifting the time origin, then guarantees the existence of $c\in\R$,
given by \eqref{defc},
such that
	\begin{equation}
		\label{boundeps2c}
		\norm{u(t)-c}_X\leq C(\Omega)\|\nabla u(t_3)\|_\infty e^{-\lambda(t-t_3)}
\leq C(\Omega)\eps_0e^{\lambda D_6} e^{-\lambda t}\le D_7 e^{-\lambda t}, \quad t\geq t_3.
						\end{equation}
					In particular, 
						\begin{equation}
	\label{boundeps2cbis}
	\norm{\nabla u(t)}_\infty\leq  D_7 e^{-\lambda t}, \quad t\geq t_3.
\end{equation}					
 To treat the remaining range $t\le t_3$, we write
	\begin{equation}
		\label{boundeps2d}
		\begin{aligned}
			\norm{u(t)-c}_X
			&\le \abs{c}+\norm{u(t)}_X \myleq{\eqref{boundeps2}}
			\Bigl|\int_0^\infty \fint_\Omega F(\nabla u(s))ds\Bigr|+\abs{\fint_{\Omega}u_0}+D_3\\
			&\leq\int_0^{t_3} \fint_\Omega |F(\nabla u(s))|ds
			+\int_{t_3}^\infty \fint_\Omega |F(\nabla u(s))|ds+\kappa+D_3.
		\end{aligned}
	\end{equation}
Now, using \eqref{boundeps2b}, \eqref{boundeps2} and the fact that $t_3\leq D_6$, we have
\begin{equation}
	\label{boundeps2e}
	\int_0^{t_3} \fint_\Omega |F(\nabla u(s))|ds\myleq{\eqref{boundeps2b}}\int_0^{t_3} \fint_\Omega D_4 \abs{\nabla u(s)}ds \myleq{\eqref{boundeps2}}\int_0^{t_3} D_4D_3\leq D_6D_4D_3.
\end{equation}
Furthermore, using \eqref{boundeps2b} and \eqref{boundeps2cbis}, we get
\begin{equation}
	\label{boundeps2f}
	\int_{t_3}^\infty \fint_\Omega |F(\nabla u(s))|ds\myleq{\eqref{boundeps2b}}\int_{t_3}^\infty \fint_\Omega D_4 \abs{\nabla u(s)}ds\myleq{\eqref{boundeps2cbis}}\int_{t_3}^\infty D_4 D_7 e^{-\lambda s} \leq \frac{D_4D_7}{\lambda}.
\end{equation}
Finally, combining \eqref{boundeps2d}-\eqref{boundeps2f}, we obtain
$$
		\norm{u(t)-c}_X
		\leq D_6D_4D_3 + \frac{D_4D_7}{\lambda}+\kappa+D_3=:
		D_8\le D_8e^{\lambda D_6}e^{-\lambda t}=D_9e^{-\lambda t},\qquad t\le t_3.
$$
 The proof is complete.
\end{proof}

\section*{Acknowledgments}
 The authors thank the referee for careful reading of the manuscript, and Said Benachour for useful suggestions.
 This work was done during a visit of the first author at LAGA, Universit\'e Sorbonne Paris Nord.
He thanks this institution for its support.

\appendix

\section{Appendix: Local existence-uniqueness-regularity and comparison principle}

{We first give the proof of  Lemma~\ref{lemma:sg_bounds}.}

\medskip

	\begin{proof}[Proof of  Lemma~\ref{lemma:sg_bounds}]
	Property \eqref{eqn:sg_boundseq1cbis} follows from the maximum principle.
	We also have
\begin{equation}
\label{eqn:sg_boundseq1c}
				\norm{\nabla S(t)v}_\infty \leq C\,\frac{\norm{v}_\infty}{\sqrt{t}},\quad t\in(0,1],\quad v\in L^\infty(\Omega),
	\end{equation}
	as a consequence of well-known heat kernel estimates (see, e.g., \cite[Theorem~2.2]{Mo}).
	Here and in the rest of the proof, $C$ denotes a positive constant depending only on $\Omega$.
Now, we claim that
\begin{equation}
							\label{eqn:sg_boundseq1b}
				\norm{\nabla S(t)v}_\infty \leq C\norm{\nabla v}_\infty,\quad t\in(0,1],\quad v\in X.
\end{equation}
Although this is probably known, we could not find a proof in the literature covering the case $X=W^{1,\infty}(\Omega)$.
However \eqref{eqn:sg_boundseq1b} can be shown as a consequence of the Bernstein arguments used in 
steps~2 and 3 of the proof of Proposition~\ref{prop:finalgen}.
Namely, denote $w(t)=S(t)v$, let $\psi\in C^2(\overline\Omega)$ be given by \eqref{AuxRobin},
which satisfies $\min_{\overline\Omega}\psi>0$, and set $h=\psi|\nabla w|^2$. 
Recalling that $w\in C([0,\infty);H^1(\Omega))\cap C^{2,1}(\overline\Omega\times (0,\infty))\cap C^\infty(\Omega\times (0,\infty))$,
we have 
\be{regulh}
h\in C([0,\infty);L^2(\Omega))\cap C^{2,1}(\Omega\times (0,\infty)),\quad
\nabla h\in C(\overline\Omega\times (0,\infty)).
\ee
By \eqref{eqn:globalexeq8} with the choice $F=0$, $h$ satisfies 
$h_t-\Lap h \le C(|\nabla h|+h)$ in $\Omega\times (0,\infty)$, 
so that $z:=e^{-Ct}h-\|h(0)\|_\infty$ solves
$$z_t-\Lap z \le C|\nabla z|\quad\hbox{in $\Omega\times (0,1]$}$$
with $z(0)\le 0$ and, moreover, $\frac{\partial z}{\partial \nu}\le 0$ on $\partial\Omega\times (0,1]$ owing to \eqref{hNeumann}.
In view of the regularity \eqref{regulh}, we may apply the maximum principle in Proposition~\ref{thm:comp0}
to infer that $z\le 0$, and this implies~\eqref{eqn:sg_boundseq1b}.

		Next assume $\fint_\Omega v=0$. Then
		$$
			\norm{S(t)v}_2\leq e^{-\lambda t}\norm{v}_2, \quad t>0.
		$$
		By the regularizing effect of the semigroup, it follows that
		$$
			\norm{S(t)v}_\infty\leq C\norm{S(t-1)v}_2 \leq Ce^{-\lambda (t-1)}\norm{v}_2\leq Ce^{-\lambda t}\norm{v}_\infty,\quad t\ge 1, 
		$$
		and in fact we can extend this result to any $t>0$ due to \eqref{eqn:sg_boundseq1cbis},
		\begin{equation}
			\label{eqn:sg_boundseq3bbis}
			\norm{S(t)v}_\infty\leq Ce^{-\lambda t}\norm{v}_\infty,\quad t\ge 0.
		\end{equation}
		Using \eqref{eqn:sg_boundseq1c}, we obtain 
	$$
			\norm{\nabla S(t)v}_\infty = \norm{\nabla S(1) \bigl(S(t-1) v\bigr)}_\infty\myleq{\eqref{eqn:sg_boundseq1c}} 
			C\norm{ S(t-1) v}_\infty\myleq{\eqref{eqn:sg_boundseq3bbis}} Ce^{-\lambda t}\norm{v}_\infty,\quad t\ge 1
$$
		and then, using \eqref{eqn:sg_boundseq1c} again,
				\begin{equation}
				\label{eqn:sg_boundseq3b}
			\norm{\nabla S(t)v}_\infty \leq C(1+t^{-1/2})e^{-\lambda t}\norm{v}_\infty,\quad t>0.
		\end{equation}

		Now, in the general case, set $\bar v=\fint_\Omega v$. Using $S(t)\bar{v}=\bar{v}$, $\nabla S(t)\bar v=\nabla \bar v=0$ 
		and applying \eqref{eqn:sg_boundseq3b} to $v-\bar v$, we obtain
					\begin{equation}
				\label{eqn:sg_boundseq3b1}
\bigl\|\nabla S(t)v\bigr\|_\infty=\bigl\|\nabla S(t)(v-\bar v)\bigr\|_\infty\leq  C(1+t^{-1/2})e^{-\lambda t}\norm{v-\bar v}_\infty,
		\quad t>0.
			\end{equation}
		Since $\abs{\bar{v}}\leq \norm{v}_\infty$ and therefore, $\norm{v-\bar v}_\infty\le 2\norm{v}_\infty$,  inequality \eqref{eqn:sg_boundseq1} follows.
		As for \eqref{eqn:sg_boundseq3}, it is a consequence of 
		\eqref{eqn:sg_boundseq1b}, \eqref{eqn:sg_boundseq3b1} and $\norm{v-\bar v}_\infty\le C\norm{\nabla v}_\infty$.
		Property \eqref{eqn:sg_boundseq31} is a consequence of \eqref{eqn:sg_boundseq3} and \eqref{eqn:sg_boundseq1cbis}.
		
		Finally, for $v\in W^{1,m}(\Omega)$, since  $\norm{v-\bar v}_\infty\le C\norm{v-\bar v}_{W^{1,m}(\Omega)}\le C\norm{\nabla v}_{L^m(\Omega)}$
		by the Morrey-Sobolev and Poincar\'e-Wirtinger inequalities (see, e.g., \cite[Theorem 12.23]{Le}), property 
		\eqref{eqn:sg_boundseq4} follows from \eqref{eqn:sg_boundseq3b1}.
		\end{proof}

We give a precise local existence-uniqueness statement for problem \eqref{eqn:HJ} with initial data $u_0\in X$.

	\begin{theorem}
	Assume \eqref{eqn:ass0}. Let $\kappa>0$ and let $u_0\in X$ be such that $\|u_0\|_X\le \kappa$.
		\smallskip
		
		(i) There exists $\varepsilon=\varepsilon(\kappa,F)>0$, and a unique solution  $u\in L^\infty(0,\varepsilon;X)$ of
		\begin{equation}
			\label{eqn:mild}
			u(t)=S(t)u_0 + \int_0^t S(t-s)F(\nabla u(s)),\quad 0<t<\eps.
		\end{equation}	
		Moreover, we have 
				\begin{equation}
			\label{eqn:mildcont}
			\lim_{t\to 0}\|u(t)-S(t)u_0\|_X=0.
					\end{equation}	
		 In addition, we may choose $\eps$ so that 
		\begin{equation}
			\label{eqn:mild2}
			\sup_{t\in[0,\eps]} \|u(t)\|_X\le  C(\Omega)\kappa.
					\end{equation}	
	
	 (ii) Let $\tau>0$ and $v\in L^\infty(0,\tau;X)$ be a solution of \eqref{eqn:mild} on $(0,\tau)$.
	Then $v$ is unique, $v\in C^{2,1}(\adh{\Omega}\times (0,\tau))$ and $v$ is a classical solution of \eqref{eqn:HJ}.

	\smallskip

	(iii) There exists $T>0$ and a unique maximal solution  $u\in L^\infty(0,T ;X)$ of \eqref{eqn:mild}.
	If $T<\infty$, then $\lim_{t\to T}\norm{u(t)}_{X}=\infty$.
		\end{theorem}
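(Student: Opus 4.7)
The natural approach is a Banach fixed-point argument on the mild formulation, followed by parabolic bootstrap to classical regularity, and a standard maximal-interval construction. For part (i), I would work on the closed ball
$$B_R\defeq\bigl\{u\in L^\infty(0,\eps;X):\|u\|_{L^\infty(0,\eps;X)}\le R\bigr\},\quad R\defeq 2C_3(\Omega)\kappa,$$
with $C_3$ the constant in \eqref{eqn:sg_boundseq31}, and define
$$\Phi(u)(t)\defeq S(t)u_0 + \int_0^t S(t-s)F(\nabla u(s))\,ds.$$
Since $F\in C^1$ with $F(0)=0$, on $\{|z|\le R\}$ there is a Lipschitz constant $L=L(R,F)$ with $|F(z)|\le L|z|$ and $|F(z_1)-F(z_2)|\le L|z_1-z_2|$. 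Using \eqref{eqn:sg_boundseq31} for the linear part and combining \eqref{eqn:sg_boundseq1cbis}--\eqref{eqn:sg_boundseq1} for the Duhamel term, one gets
$$\Bigl\|\int_0^t S(t-s)F(\nabla u(s))\,ds\Bigr\|_X\le LR\int_0^t\bigl[1+C_1(1+(t-s)^{-1/2})e^{-\lambda(t-s)}\bigr]\,ds\le C(R,F)(\eps+\sqrt{\eps}),$$
so for $\eps=\eps(\kappa,F)$ small, $\Phi(B_R)\subset B_R$. The same semigroup estimates applied to $F(\nabla u_1)-F(\nabla u_2)$ yield a contraction constant $<1$, and Banach's theorem produces the unique fixed point in $B_R$, giving \eqref{eqn:mild} and \eqref{eqn:mild2}. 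Property \eqref{eqn:mildcont} follows since the Duhamel term is $O(\sqrt{t})$ in $X$.

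For part (ii), uniqueness on $[0,\tau)$ is a standard continuation of local uniqueness: given two solutions $u_1,u_2\in L^\infty(0,\tau;X)$, pick $K$ bounding both in $X$, obtain a common Lipschitz constant for $F$ on the relevant ball, and apply the same singular-kernel estimate to $w=u_1-u_2$,
$$\|w(t)\|_X\le C\int_0^t (1+(t-s)^{-1/2})\|\nabla w(s)\|_\infty\,ds,$$
which by a Gr\"onwall-type inequality for weakly singular kernels (or by iterating the contraction on small subintervals) forces $w\equiv 0$. For classical regularity, $u\in L^\infty(0,\tau;X)$ and $F\in C^1$ give $F(\nabla u)\in L^\infty_{loc}(\overline\Omega\times(0,\tau))$; $L^q$ parabolic regularity for the Neumann problem (\cite[Theorem~48.1]{QS}) then yields $u\in W^{2,1;q}_{loc}$ for every $q<\infty$, hence $\nabla u\in C^{\alpha,\alpha/2}_{loc}$ by Sobolev embedding, so $F(\nabla u)\in C^{\alpha,\alpha/2}_{loc}$ by the chain rule, and Schauder estimates (\cite[Theorem~48.2]{QS}) upgrade $u$ to $C^{2+\alpha,1+\alpha/2}_{loc}(\overline\Omega\times(0,\tau))$, a classical solution of \eqref{eqn:HJ}.

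For part (iii), let $T\defeq\sup\{\tau>0:\text{there exists a solution of \eqref{eqn:mild} on $(0,\tau)$}\}$, which by (ii) is well-defined with a unique maximal solution obtained by patching. The blow-up alternative follows by contradiction: if $T<\infty$ and $\limsup_{t\to T^-}\|u(t)\|_X=K<\infty$, choose $t^*<T$ sufficiently close to $T$ with $\|u(t^*)\|_X\le K$, and apply (i) with initial data $u(t^*)$; this produces a solution on $(t^*,t^*+\eps(K,F))$ which, when glued to $u$ using the uniqueness in (ii), strictly extends beyond $T$, contradicting the definition of $T$.

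The main obstacle is the singular $(t-s)^{-1/2}$ factor in \eqref{eqn:sg_boundseq1}: it rules out a plain Banach argument in $C([0,\eps];X)$ and forces one to exploit the stronger gradient bound \eqref{eqn:sg_boundseq31} for the free evolution, together with the observation that the kernel remains integrable so the nonlinear Duhamel term is $o(1)$ in $X$ as $t\to 0$. A secondary technical point is that the bootstrap from mild to $C^{2,1}$ must go through $W^{2,1;q}$ for $q$ large, since initially we only know $F(\nabla u)\in L^\infty$ rather than $C^\alpha$.
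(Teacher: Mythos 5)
Your proposal is correct and follows essentially the same route as the paper: a Banach fixed-point argument on the ball of radius $2C_3(\Omega)\kappa$ in $L^\infty(0,\eps;X)$ using the semigroup bounds \eqref{eqn:sg_boundseq1cbis}, \eqref{eqn:sg_boundseq1}, \eqref{eqn:sg_boundseq31} (the paper packages the integrable singular kernel as $\|S(t)\phi\|_X\le C_4t^{-1/2}\|\phi\|_\infty$), then a bootstrap through $W^{2,1;q}$ and Schauder estimates for classical regularity, and a standard continuation argument for (iii). The only divergence is in the uniqueness of (ii): you run a weakly singular Gr\"onwall estimate directly on the mild formulation, while the paper first upgrades to classical regularity and then invokes the comparison principle (Proposition~\ref{thm:comp}); both are valid.
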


Although the result is essentially standard, we provide a proof for the convenience of the readers.

\smallskip

	\begin{proof}
	Let $Y_\tau:= L^\infty((0,\tau);X)$, 
with norm $\|v\|_{Y_\tau}:=\esssup_{t\in (0,\tau)}\|v(t)\|_X$.
For given $\tau\in(0,1]$ and $M>0$, the closed ball $B_{\tau,M}=\{v\in Y_\tau:\ \|v\|_{Y_\tau}\le M\}$
is a complete metric space.
For $v\in Y_\tau$, we define
$$\mathcal{T}(v)(t):=S(t)u_0+\int_0^t S(t-s)F(\nabla v(s))\, ds,\quad 0< t<\tau.$$
We set
$$M:=2C_3\|u_0\|_X,\quad K=K_M:=\sup_{|z|\le M} |F(z)|
\quad\hbox{and}\quad L=L_M:=\sup_{|z|\le M} |\nabla F(z)|,$$
where $C_3$ is the constant from \eqref{eqn:sg_boundseq31}.
By \eqref{eqn:sg_boundseq1cbis}, \eqref{eqn:sg_boundseq1}, there exists $C_4=C_4(\Omega)>0$ such that
				\begin{equation} \label{eqn:sg_boundseq1cter}
		\norm{S(t)\phi}_X \leq C_4t^{-1/2}\norm{\phi}_\infty,\quad \phi\in  L^\infty(\Omega),\quad 0<t\le 1.
			\end{equation}
For any $v,w\in B_{\tau,M}$, using estimates \eqref{eqn:sg_boundseq1cter}, \eqref{eqn:sg_boundseq31}, we have
$$\begin{aligned}
\|\mathcal{T}(v)(t)\|_X
&\le \|S(t)u_0\|_X+\int_0^t \|S(t-s)(F(\nabla v(s)))\|_X\, ds\\
&\le C_3\|u_0\|_X+C_4\int_0^t (t-s)^{-1/2}\|F(\nabla v(s)))\|_\infty\, ds 
\le C_3\|u_0\|_X+2C_4K\tau^{1/2}
\end{aligned}
$$
and
$$\begin{aligned}
\|\mathcal{T}(v)(t)-\mathcal{T}(w)(t)\|_X
&\le \int_0^t \|S(t-s)(F(\nabla v(s)))-F(\nabla w(s))))\|_X\, ds \\
&\le C_4\int_0^t (t-s)^{-1/2}\|F(\nabla v(s)))-F(\nabla w(s)))\|_\infty\, ds \\
&\le C_4L\int_0^t (t-s)^{-1/2}\|v-w\|_{Y_\tau}, \\
\end{aligned}
$$
hence
$$\|\mathcal{T}(v)(t)-\mathcal{T}(w)(t)\|_{Y_\tau}\le 2C_4L\tau^{1/2} \|v-w\|_{Y_\tau}.$$
Choosing $\eps=\tau\in(0,1]$ small, so that $2C_4\tau^{1/2}\le \min\bigl((2L)^{-1},C_3\kappa/K\bigr)$, it follows that 
$\mathcal{T}: B_{\tau,M}\to B_{\tau,M}$ is a contraction mapping. Consequently, 
by the Banach fixed point theorem, $\mathcal{T}$
admits a unique fixed point $u$ in $B_{\tau,M}$.
In addition, \eqref{eqn:mild2} is true.
On the other hand, we have
$$\|u(t)-S(t)u_0\|_X\le\int_0^t \|S(t-s)(F(\nabla u(s)))\|_X\, ds
\le C_4\int_0^t (t-s)^{-1/2}\|F(\nabla u(s))\|_\infty\, ds
\le C_4 K t^{1/2},$$
hence \eqref{eqn:mildcont}.

		\smallskip

	(ii) The regularity statement follows, after multiplying the solution by a cut-off vanishing near $t=0$,  
	from $L^p$ and Schauder existence theory 
and uniqueness of weak solutions for linear parabolic problems; 
	see \cite[p.93, first paragraph]{QS} for details on a similar case
	and cf.~also the argument at the beginning of the proof of Proposition~\ref{prop:finalgen2}.
	
		The uniqueness is then a consequence of the regularity of $v$ and of the comparison principle in Proposition~\ref{thm:comp},
		noting that a mild solution $v\in L^\infty(0,\tau;X)$ must satisfy $v\in C([0,\tau);L^2(\Omega))$.

				\smallskip	
	(iii) This follows from a standard continuation argument.
		\end{proof}

We have the following  maximum and comparison principles.
We hereafter denote $Q_\tau=\Omega\times(0,\tau)$, $S_\tau=\partial\Omega\times(0,\tau)$.

\begin{proposition}
	\label{thm:comp0}
	Let $\Omega\subset \Rn$ be a bounded $C^2$ domain. Let $\tau>0$ and $w\in C([0,\tau],L^2(\Omega))$
	be such that $w_t,D^2w\in L^2_{loc}(Q_\tau)$ and 
	\be{nablawC}
	\nabla w\in C(\overline\Omega\times(0,\tau)).
	\ee
	Assume that, for some $K\ge 0$,
	\be{nablawC2}
		\left\{
		\begin{aligned}
			\frac{\partial w}{\partial t}-\Lap w &\le K|\nabla w|&&\hbox{a.e.~in $Q_\tau\cap\{w>0\}$,}\\
			w(0) & \le 0&& \hbox{in $\Omega$,} \\
			 \frac{\partial w}{\partial\nu}& \le 0 &&\hbox{in $S_\tau$.}
		\end{aligned}
	\right.
\ee
Then $w\le 0$ in $Q_\tau$.
\end{proposition}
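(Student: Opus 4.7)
The plan is to apply an energy method to the positive part $w_+ := \max(w, 0)$. Set $g(t) := \int_\Omega w_+^2(t, x) \, dx$. Since $w(0) \le 0$, we have $g(0) = 0$, and by $w \in C([0, \tau]; L^2(\Omega))$ the function $g$ is continuous at $t = 0$. The goal is to derive a Gronwall-type inequality $g'(t) \le C g(t)$ for a.e.\ $t \in (0, \tau)$; combined with $g(0) = 0$, this forces $g \equiv 0$, and hence $w \le 0$ in $Q_\tau$.

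The key computation proceeds as follows. Multiplying the differential inequality by $w_+$, and noting that both sides vanish on $\{w \le 0\}$ (where $w_+ = 0$), yields $w_t w_+ - (\Delta w) w_+ \le K|\nabla w| w_+$ a.e.\ in $Q_\tau$. Integrating over $\Omega$, using the divergence theorem together with the Neumann inequality $\frac{\partial w}{\partial \nu} \le 0$ and $w_+ \ge 0$ on $\partial \Omega$, and applying Stampacchia's chain rule $\nabla w_+ = \mathbf{1}_{\{w > 0\}} \nabla w$, one gets
\[
\tfrac{1}{2} g'(t) + \int_\Omega |\nabla w_+|^2 \, dx \le \int_{\partial \Omega} \tfrac{\partial w}{\partial \nu}\, w_+ \, d\sigma + K \int_\Omega |\nabla w_+| w_+ \, dx \le K \int_\Omega |\nabla w_+| w_+ \, dx.
\]
Young's inequality $K |\nabla w_+| w_+ \le |\nabla w_+|^2 + \tfrac{K^2}{4} w_+^2$ then absorbs the Dirichlet energy, leaving $g'(t) \le \tfrac{K^2}{2} g(t)$. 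Gronwall's lemma with $g(0) = 0$ completes the argument.

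The main technical obstacle is justifying the integration by parts and the differentiation of $g$ given that the regularity $D^2 w \in L^2_{loc}(Q_\tau)$ is purely interior in space, so $\Delta w$ need not a priori lie in $L^2(\Omega)$ up to the boundary. This will be handled by approximation: integrate over the interior subdomain $\Omega_\epsilon := \{x \in \Omega : d(x, \partial \Omega) > \epsilon\}$, where the full integration by parts is valid since $w(t, \cdot) \in H^2(\Omega_\epsilon)$ for a.e.\ $t$, then pass to the limit $\epsilon \to 0^+$. The continuity $\nabla w \in C(\overline{\Omega} \times (0, \tau))$ ensures that the boundary integrals on $\partial \Omega_\epsilon$ converge to the desired Neumann boundary term on $\partial \Omega$, preserving the sign $\le 0$ in the limit. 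The time differentiation of $g$ is justified in a similar way, using $w_t \in L^2_{loc}(Q_\tau)$ together with a standard regularization argument in time.
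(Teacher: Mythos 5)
Your proposal is correct and follows essentially the same route as the paper: a Stampacchia energy estimate for $w_+$ on the interior approximations $\Omega_\delta$, followed by Gronwall and the limits $\delta\to 0$, $t_0\to 0$. The only technical difference is in the treatment of the boundary term on $\partial\Omega_\delta$: rather than passing to the limit in that integral directly (which requires continuity of $w$ itself up to the boundary for $t>0$, a fact that does follow from the hypotheses), the paper bounds it by $C\varepsilon\bigl(1+\|w_+\|_{H^1(\Omega_\delta)}^2\bigr)$ using a trace inequality with constant uniform in $\delta$ and absorbs the gradient part into the Dirichlet energy -- both devices work here.
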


\begin{proposition}
	\label{thm:comp}
	Let $\Omega\subset \Rn$ be a bounded $C^2$ domain. Let $\tau>0$ and $u,v\in  C^{2,1}(\adh{\Omega}\times(0,\tau])\cap C([0,\tau],L^2(\Omega))\cap  L^\infty(0,\tau;X)$. 
	Assume
$$
		\left\{
		\begin{aligned}
			\frac{\partial u}{\partial t}-\Lap u -  F(\nabla u) & \leq \frac{\partial v}{\partial t}-\Lap v -  F(\nabla v)\qquad && \hbox{in $Q_\tau$,} \\
			u(0) & \leq v(0) \qquad &&  \hbox{in $\Omega$,}\\
			 \frac{\partial u}{\partial\nu}& \leq \frac{\partial v}{\partial\nu} \qquad &&
			 \hbox{in $S_\tau$,}
		\end{aligned}
	\right.
$$
	with $F:\Rn \longrightarrow \R$ of class $C^1$. Then, either $u\equiv v$ or 
$$
		u<v \qquad \text{in} \ \ \adh{\Omega}\times(0,\tau].
$$
\end{proposition}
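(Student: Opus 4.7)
Set $w := v - u$. Since $F\in C^1$ and $\nabla u,\nabla v\in L^\infty(Q_\tau)$ (from $u,v\in L^\infty(0,\tau;X)$), the mean value theorem gives
\[
F(\nabla v)-F(\nabla u) \;=\; b(x,t)\cdot\nabla w, \qquad b(x,t):=\int_0^1\nabla F\bigl(\nabla u(x,t)+s\nabla w(x,t)\bigr)\,ds,
\]
with $b\in L^\infty(Q_\tau;\Rn)$. Subtracting the two differential inequalities, $w$ satisfies the linear parabolic inequality
\[
w_t-\Lap w - b\cdot\nabla w \;\ge\; 0 \quad\text{in } Q_\tau, \qquad w(0)\ge 0 \ \text{in }\Omega, \qquad \frac{\partial w}{\partial\nu}\ge 0\ \text{on }S_\tau.
\]
The regularity $u,v\in C^{2,1}(\adh{\Omega}\times(0,\tau])$ ensures that all quantities are classical and that $b$ is continuous up to the boundary for $t>0$.

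\textbf{Step 1 (weak comparison: $w\ge 0$).} Apply Proposition~\ref{thm:comp0} to $\tilde w:=-w=u-v$. Indeed, on the set $\{\tilde w>0\}$ we have
\[
\tilde w_t-\Lap\tilde w \;\le\; F(\nabla u)-F(\nabla v) \;=\; -b\cdot\nabla\tilde w \;\le\; \|b\|_\infty\,|\nabla\tilde w|,
\]
while $\tilde w(0)\le 0$ in $\Omega$ and $\partial_\nu\tilde w\le 0$ on $S_\tau$. The required regularity of $\tilde w$ is immediate from the $C^{2,1}$ assumption. Proposition~\ref{thm:comp0} then yields $\tilde w\le 0$, i.e.~$w\ge 0$ in $Q_\tau$.

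\textbf{Step 2 (strong dichotomy).} Assume $u\not\equiv v$, so $w\not\equiv 0$; I claim $w>0$ on $\adh{\Omega}\times(0,\tau]$. Suppose not: let $(x_0,t_0)\in\adh{\Omega}\times(0,\tau]$ with $w(x_0,t_0)=0$. Two cases arise.

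\emph{Interior case} ($x_0\in\Omega$): The classical parabolic strong maximum principle, applied to the nonnegative solution $w$ of the linear uniformly parabolic operator $\partial_t-\Lap-b\cdot\nabla$ with bounded coefficients on the cylinder $\Omega\times(0,t_0]$, implies $w\equiv 0$ on $\adh{\Omega}\times[0,t_0]$. Applying Step~1 in both directions on $[t_0,\tau]$ (with initial data $u(t_0)=v(t_0)$) yields $u\equiv v$ on the whole of $\adh\Omega\times[0,\tau]$, contradicting our assumption.

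\emph{Boundary case} ($x_0\in\partial\Omega$): If additionally $w$ vanishes at some interior point of $Q_{t_0}$, we reduce to the previous case. Otherwise $w>0$ in $\Omega\times(0,t_0]$, and Hopf's lemma for parabolic equations (applied at $(x_0,t_0)$, which satisfies the interior sphere condition since $\Omega$ is $C^2$) gives $\partial_\nu w(x_0,t_0)<0$, contradicting $\partial_\nu w\ge 0$ on $S_\tau$.

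In either case we reach a contradiction, so $w>0$ on $\adh\Omega\times(0,\tau]$, i.e.~$u<v$ there.

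\textbf{Main obstacle.} The substantive ingredient is the combined use of the parabolic strong maximum principle and Hopf's lemma to handle the Neumann boundary; the \emph{a priori} weak comparison step is clean because Proposition~\ref{thm:comp0} already accepts the nonlinear-looking inequality $\partial_t\tilde w-\Lap\tilde w\le K|\nabla\tilde w|$. The linearization argument is made rigorous precisely because $u,v\in L^\infty(0,\tau;X)$ confines $\nabla u,\nabla v$ to a compact set on which $\nabla F$ is bounded, so the drift coefficient $b$ is in $L^\infty$ as required for both the weak comparison and the strong maximum principle / Hopf invocations.
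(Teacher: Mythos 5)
The paper offers no proof of this proposition at all --- it simply cites \cite[Proposition 52.7]{QS} --- so your self-contained argument is by necessity a different route. Your Step 1 is correct and economical: since $u,v\in L^\infty(0,\tau;X)$ confines $\nabla u,\nabla v$ to a compact set on which $\nabla F$ is bounded, the mean-value linearization is legitimate, and feeding $\tilde w=u-v$ into Proposition~\ref{thm:comp0} (whose hypotheses $\tilde w$ satisfies thanks to the assumed $C^{2,1}(\adh{\Omega}\times(0,\tau])\cap C([0,\tau],L^2)$ regularity) does yield $u\le v$. The boundary case of Step 2 (Hopf's lemma at a $C^2$ boundary point where $w>0$ in the interior) is also fine.

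The genuine gap is in the interior case of Step 2. Having obtained $w\equiv 0$ on $\overline\Omega\times[0,t_0]$ from the strong maximum principle, you claim $u\equiv v$ on all of $[0,\tau]$ by ``applying Step 1 in both directions with initial data $u(t_0)=v(t_0)$''. But the hypothesis is the \emph{one-sided} inequality $u_t-\Lap u-F(\nabla u)\le v_t-\Lap v-F(\nabla v)$: restarting the comparison at $t_0$ only re-proves $u\le v$ on $[t_0,\tau]$ and gives nothing in the reverse direction, so equality cannot be propagated forward in time. Indeed the literal dichotomy fails for strict sub-/supersolution pairs: take $F\equiv 0$, $u\equiv 0$ and $v(x,t)=\int_0^t\max(s-t_0,0)\,ds$, so that $v_t-\Lap v=\max(t-t_0,0)\ge 0=u_t-\Lap u$, $v(0)=0$, $\partial_\nu v=0$; then $u\equiv v$ for $t\le t_0$ while $u<v$ for $t>t_0$, and neither alternative holds on all of $(0,\tau]$. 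What your argument actually establishes (correctly) is the backward version: $u\le v$ in $Q_\tau$, and if $u(x_0,t_0)=v(x_0,t_0)$ for some $x_0\in\adh{\Omega}$ and $t_0\in(0,\tau]$, then $u\equiv v$ on $\overline\Omega\times[0,t_0]$. That version is all the paper ever uses: in Proposition~\ref{thm:uac} only $u\le v$ is needed, and in Proposition~\ref{prop:finalgen2} both $z$ and the constant $\ell$ are exact solutions of \eqref{eqzz} with contact at every time, so the backward version already forces $z\equiv\ell$ (and for two exact solutions, forward uniqueness would in any case propagate equality past $t_0$). So either weaken the conclusion to the backward dichotomy, or add the hypothesis that $u$ and $v$ solve the same equation before invoking the ``both directions'' step; as written, that step is not justified.
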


For Proposition~\ref{thm:comp} we refer to, e.g., \cite[Proposition 52.7]{QS}.
As for Proposition~\ref{thm:comp0}, since $w$ is not smooth up to the boundary, for convenience we give a proof 
(expanding arguments from, e.g., \cite[Theorem~52.8]{QS} and \cite[Remarks 52.9 and 52.11(a)]{QS}).

\smallskip

	\begin{proof}[Proof of Proposition~\ref{thm:comp0}]
{\bf Step 1.} {\it Domain approximation and uniform trace inequality.}
Since $w$ is not smooth up to the boundary, we shall use a suitable approximation of the domain $\Omega$.
Set $\Omega_\delta=\Omega\cap\{{\rm dist}(x,\partial\Omega)>\delta\}$,
$\omega_\delta=\Omega\cap\{{\rm dist}(x,\partial\Omega)<\delta\}$ and 
$d(x)={\rm dist}(x,\partial\Omega)$.
It is well known that, for $\delta_0>0$ sufficiently small, we have
\be{tracedelta1}
d\in C^2(\overline\omega_{\delta_0}),\quad \nu=-\nabla d\ \hbox{ on $\partial\Omega$,}
\ee
\be{tracedelta2}
\hbox{$\omega_\delta$ and $\Omega_\delta$ are $C^2$-smooth for each $\delta\in(0,\delta_0]$.}
\ee
Moreover, reducing $\delta_0$ if necessary, it is easy to see that, for all $\delta\in(0,\delta_0]$,
the function $d_\delta(x):={\rm dist}(x,\partial\Omega_\delta)$ verifies
$d_\delta(x)=d(x)-\delta$ in $\Omega_\delta\cap\omega_{\delta_0}$.
Therefore, the normal vector field $\nu_\delta$ of $\partial\Omega_\delta$ satisfies 
\be{tracedelta3}
\nu_\delta=-\nabla d_\delta=-\nabla d\quad\hbox{on $\partial\Omega_\delta$,\quad $0<\delta<\delta_0$.}
\ee

We next claim that we have the uniform trace inequality 
\be{tracedelta}
\|v\|_{L^2(\partial\Omega_\delta)}
\leq C_1\|v\|_{H^1(\Omega_\delta)},\quad v\in H^1(\Omega_\delta), \quad 0<\delta<\delta_0,
\ee
with $C_1=C_1(\Omega)>0$ independent of $\delta$.
Indeed, by \eqref{tracedelta1} and \eqref{tracedelta2} for $\omega_{\delta_0}$,
 we may fix an extension $\tilde d\in C^2(\overline\Omega)$ such that
$\tilde d=d$ in $\overline\omega_{\delta_0}$. By \eqref{tracedelta2} for $\Omega_\delta$
we may apply the divergence theorem and, using \eqref{tracedelta3}, we get
$$\begin{aligned}
\|v\|^2_{L^2(\partial\Omega_\delta)} 
&=-\int_{\partial\Omega_\delta}  v^2\nabla \tilde d\cdot\nu_\delta\,d\sigma_\delta
=-\int_{\Omega_\delta} \nabla\cdot(v^2\nabla \tilde d)\,dx
=-\int_{\Omega_\delta} (v^2\Delta \tilde d+2v\nabla v\cdot\nabla \tilde d)\,dx \\
&\le \int_{\Omega_\delta} |\nabla v|^2\,dx
+\sup_\Omega\,\bigl(-\Delta \tilde d+|\nabla \tilde d|^2\bigr)\int_{\Omega_\delta} v^2\,dx
\le C(\Omega)\|v\|^2_{H^1(\Omega_\delta)}.
\end{aligned}$$

{\bf Step 2.} {\it Stampacchia argument.}
Our assumptions imply
$w_+\in C([0,\tau),L^2(\Omega_\delta))\cap H^1_{loc}((0,\tau),$ $L^2(\Omega_\delta))$
and, for a.e.~$t\in (0,\tau)$, $w_+(t)\in H^1(\Omega_\delta)$. 
For a.e.~$t\in (0,\tau)$, multiplying the PDE in \eqref{nablawC2} by $w_+$, using $\Delta w(\cdot,t) \in  
L^2(\Omega_\delta)$ and $\nabla(w_+)(\cdot,t)=\chi_{\{w>0\}}\nabla w(\cdot,t)$
and integrating by parts, we get
$$
{1\over 2}{d\over dt}\int_{\Omega_\delta} (w_+)^2(t)\,dx
\leq \int_{\partial\Omega_\delta} w_+(\nabla w(x,t)\cdot \nu_\delta)\,d\sigma_\delta -\int_{\Omega_\delta} |\nabla(w_+)|^2\,dx+K\int_{\Omega_\delta} |\nabla w|w_+\,dx.$$
Fix $\tau_1<\tau$, $t_0\in(0,\tau_1)$ and $\eps>0$. 
By \eqref{nablawC},
there exists $\delta_1\in(0,\delta_0]$ such that, for all $\delta\in(0,\delta_1]$,
$$\sup_{(x,t)\in\partial\Omega_\delta\times(t_0,\tau_1)} \bigl|\nabla w(x,t)\cdot \nabla d(x)-\nabla w(P(x),t)\cdot(\nabla d)(P(x))\bigr|
\le \eps,$$
where $P$ is the projection onto $\partial\Omega$.
By \eqref{tracedelta1}, \eqref{tracedelta3} and the boundary conditions in \eqref{nablawC2}, it follows that
$$\sup_{\partial\Omega_\delta\times(t_0,\tau_1)}\nabla w\cdot \nu_\delta\le \eps$$
and, using~\eqref{tracedelta}, we deduce that
$$\int_{\partial\Omega_\delta} w_+(\nabla w\cdot \nu_\delta)\,d\sigma_\delta
\le  \eps\int_{\partial\Omega_\delta} w_+\,d\sigma_\delta
\le  \eps\int_{\partial\Omega_\delta} \bigl(1+w_+^2\bigr)\,d\sigma_\delta
\myleq{\eqref{tracedelta}}  C_2\eps\Bigl(1+\int_{\Omega_\delta} (w_+^2+|\nabla w_+|^2)\,dx\Bigr),$$
 for all $t\in(t_0,\tau_1)$,
 with $C_2=C_2(\Omega)>0$ independent of $\delta$. 
 Assuming $\eps<1/(2C_2)$, it follows that
 \begin{equation}
 	\label{eqn:compeq1}
\begin{aligned}
{1\over 2}{d\over dt}\int_{\Omega_\delta} (w_+)^2(t)\,dx
&\leq C_2\eps+\Bigl(C_2\eps-1+\frac12\Bigr)\int_\Omega |\nabla (w_+)|^2\,dx
+(C_2\eps+K^2/2)\int_{\Omega_\delta} (w_+)^2\,dx\\
&\leq C_2\eps+\frac{1+K^2}{2}\int_{\Omega_\delta} (w_+)^2\,dx.
\end{aligned}
\end{equation}
Now, as the solution of $y'=a+by$ is given by $y(t)=(y(t_0)+a/b)e^{b(t-t_0)}-a/b$, integrating \eqref{eqn:compeq1}, 
for every $t\in(t_0,\tau_1)$, we obtain
$$
\int_{\Omega_\delta} (w_+)^2(t)\,dx\leq \left(\int_{\Omega_\delta} (w_+)^2(t_0)\,dx + \frac{2C_2\varepsilon}{1+K^2}\right)e^{(1+K^2)(t-t_0)}.
$$
So letting $\delta\to 0$ and then $\eps\to 0$, we get
$\int_\Omega (w_+)^2(t)\,dx\le e^{(1+K^2)(t-t_0)}\int_{\Omega} (w_+)^2(t_0)\,dx$. Finally letting $t_0\to 0$ and
using $w\in C([0,\tau_1],L^2(\Omega))$ and $w(0)\le 0$, we obtain $w\leq 0$ in $Q_{\tau_1}$. As $\tau_1<\tau$ was arbitrary, the conclusion follows.
	\end{proof}

\footnotesize

\end{document}